\newcommand{\norm}[1]{\left\| #1 \right\|}
\newcommand{\abs}[1]{\left| #1 \right|}
\newcommand{\RR} {\mathbb R}
\newcommand{\NN} {\mathbb N}
\newcommand{\supp} {\operatorname{supp}}
\newcommand{\HM} {{\cal H}}
\newcommand{\HMS} {\omega_{N-1}}
\newcommand{\LM} {{\cal L}}
\newcommand{\dist}[2] {\operatorname{dist}\left(#1;#2\right)}
\newcommand{\loc} {\text{loc}}
\newcommand{\eps} {\varepsilon}
\newcommand{\lam} {\lambda}
\newcommand{\mbf}[1]{\boldsymbol{#1}}
\newcommand{\Nemytskii}{Nemytskii}
\newcommand{\Poincare}{Poincar\'{e}}
\newcommand{\Hoelder}{H\"{o}lder}
\newcommand{\EL}{Euler--Lagrange}
\newcommand{\DuBoisR}{Du Bois--Reymond}
\newcommand{\Lebesgue}{Lebesgue}
\newtheorem{thm}{Theorem}[section]
\newtheorem{cor}[thm]{Corollary}
\newtheorem{lem}[thm]{Lemma}
\newtheorem{prop}[thm]{Proposition}
\theoremstyle{definition}
\newtheorem{defn}[thm]{Definition}
\theoremstyle{remark}
\newtheorem{rem}[thm]{Remark}
\author{Stefan Kr\"{o}mer\\
Institut f\"{u}r Mathematik \\ Lehrstuhl
f\"{u}r Nichtlineare Analysis \\ Universit\"{a}t Augsburg \\
Email: stefan.kroemer@math.uni-augsburg.de}
\date{}
\title{Existence and symmetry of minimizers for nonconvex radially symmetric variational problems}
\begin{document}
%%%%%%%%%%%%%%%%%%%%%%%%%%%%%%%%%%%%%%%%%%%%%%%%%%%%%%%%%%%%%%%%%%%%%%%%%%%%%%%
%%%%%%%%%%%%%%%%%%%%%%%%%%%%%%%%%%%%%%%%%%%%%%%%%%%%%%%%%%%%%%%%%%%%%%%%%%%%%%%
%%%%%%%%%%%%%%%%%%%%%%%%%%%%%%%%%%%%%%%%%%%%%%%%%%%%%%%%%%%%%%%%%%%%%%%%%%%%%%%
\selectlanguage{english}
\maketitle{}
%%%%%%%%%%%%%%%%%%%%%%%%%%%%%%%%%%%%%%%%%%%%%%%%%%%%%%%%%%%%%%%%%%%
\begin{abstract}
%\noident
We study functionals of the form
%
%\vspace{-2ex}
\begin{equation*}
    E(u):=\int_{B_R(0)} W(\nabla u)+G(u)\,dx,
\end{equation*}
%\vspace{-2ex}
%
where $u$ is a real valued function over the ball $B_R(0)\subset \RR^N$ which vanishes on the boundary 
and $W$ is nonconvex. The functional is assumed to be radially symmetric in the sense that 
$W$ only depends on $\abs{\nabla u}$. 
Existence of one and radial symmetry of all global minimizers is shown with an approach based on convex relaxation. Our assumptions on $G$ do not include convexity, thus extending a result of A.~Cellina and S.~Perrotta.
% \cite{CePe94a}. 
\end{abstract}
{\bf Keywords}: Nonconvex variational problem, radial symmetry \\
{\bf MSC}: 49J10, 49J45
%\newpage
%%%%%%%%%%%%%%%%%%%%%%%%%%%%%%%%%%%%%%%%%%%%%%%%%%%%%%%%%%%%%%%%%%%%%%%%%%%%%%%
%%%%%%%%%%%%%%%%%%%%%%%%%%%%%%%%%%%%%%%%%%%%%%%%%%%%%%%%%%%%%%%%%%%%%%%%%%%%%%%
%%%%%%%%%%%%%%%%%%%%%%%%%%%%%%%%%%%%%%%%%%%%%%%%%%%%%%%%%%%%%%%%%%%%%%%%%%%%%%%
\section{Introduction}

%\markboth{}{}
%\thispagestyle{myheadings}
%%%%%%%%%%%%%%%%%%%%%%%%%%%%%%%%%%%%%%%%%%%%%%%%%%%%%%%%%%%%%%%%%%%%%%%%%%%%%%%
%%%%%%%%%%%%%%%%%%%%%%%%%%%%%%%%%%%%%%%%%%%%%%%%%%%%%%%%%%%%%%%%%%%%%%%%%%%%%%%
%%%%%%%%%%%%%%%%%%%%%%%%%%%%%%%%%%%%%%%%%%%%%%%%%%%%%%%%%%%%%%%%%%%%%%%%%%%%%%%
This paper is concerned with the variational problem
arising from the energy functional
\begin{equation*}\label{E0}\tag{\ensuremath{E}}
    E(u):=\int_{B_R(0)} \left[W(\nabla u)+G(u)\right]\,dx,
\end{equation*}
where $u$ is a scalar field on
$B_R(0)=\{x\in \RR^N\mid \abs{x}<R\}\subset \RR^N$ ($N\geq 2$) which vanishes on the boundary. 
Simple examples for the functions $W$ and $G$ considered
are $W(\xi):=(\abs{\xi}^2-1)^2$ and $G(\mu):=-\mu^2$. The primary
qualitative features of $W$ are that it is continuous, nonconvex, coercive and
radially symmetric in the sense that it only depends on the euclidean
norm of its argument. It may have more wells 
than the two in the example above, however, and convexity of $W$ at infinity (i.e., if the norm of its argument is large enough) is not assumed. 
Besides the prototype above, our assumptions on $G$ in particular include all functions of class $C^2$ which are strictly monotone and do not grow too fast. Moreover, the monotonicity assumptions on $G$ can be dropped if $0\in \RR^N$ is the unique minimizer of $W$.
%In particular, convexity of $G$ is not assumed. 
%Both minima and
%(more general) critical points are discussed, asking the questions
%of existence, uniqueness and qualitative properties.

Abundant literature addressing the existence and further properties
of global minimizers of nonconvex variational problems is available. 
For an overview of known results in the case on nonconvex simple integrals ($N=1$),
we refer to \cite{Ray87b,CePe02a} and the references therein. 
In higher dimensions ($N>1$), conditions for attainment have been obtained even without assuming
symmetry (in  particular, the domain does not have to be a ball, then), see for example \cite{Ray87a,Ce99a,CeCuGui04a}. 
Generalizations for vector--valued $u$ are obtained in \cite{CeCo90,Ray94a,CaTah99a} ($N=1$) and
\cite{Ray92a} ($N>1$). 
For the most general existence result for autonomous functionals 
%(i.e., with an integrand which does not explicitly depend on $x$) 
and further references, the reader is referred to \cite{CeCuGui04a}. 
%There, integrands of the general form $W(\nabla u,u)$ are treated (as opposed to the more special additive form %considered here). 
In the case of our energy $E$, the existence of a minimizer of $E$ in $W_0^{1,p}$ follows from the results in \cite{CeCuGui04a} if $G$ does not have strict local minima and (roughly speaking) does not oscillate too fast, provided that $W$ satisfies (\ref{Wcaffine}) below. 
Still, some open questions remain. In particular,
to ensure existence of a minimizer, all of the above mentioned
papers for $N>1$ have to assume that the convex envelope $W^{**}$ of $W$ has the following property:
\begin{align}\label{Wcaffine}
    W^{**}~\text{is affine on any component of the detachment
    set}~\{W^{**}<W\}.
\end{align}
However, this behavior of the convex envelope is by no means
typical. Usually, $W^{**}$ will be affine only along suitable
one--dimensional lines wherever it differs from $W$.
Our radially symmetric prototype example above of course satisfies
(\ref{Wcaffine}), but no multi-well potential $W$ whose set of global minima
consists of a finite number of points has this property, and even if $W$
is radially symmetric, any nonconvex parts outside the outermost sphere of
minima are ruled out. If $G=0$, \eqref{Wcaffine} is known to be necessary for attainment
for \emph{arbitrary} Dirichlet boundary conditions \cite{Ce93a,Frie94a}. If $G$ is strictly concave and/or strictly monotone, examples are rare. For instance, even strictly concave $G$ cannot always guarantee existence as it would in the one--dimensional case (treated in \cite{CeCo90}) if \eqref{Wcaffine} fails to hold, see \cite{diss05B} (Section 1.4). 

The radially symmetric case is studied in
\cite{CePe94a,Cra00a,Ga01a,CraMu05a}. 
There, (\ref{Wcaffine}) can be dropped provided that $G$ is convex and decreasing, a result first stated in~\cite{CePe94a} (see also \cite{CePe05}, where an error in the proof of Theorem 2 in \cite{CePe94a} is corrected). 
A generalization for vector--valued $u$ can be found in
in \cite{CraMu01a}.
%The convexity of $G$ is also exploited to
%prove the existence of a radially symmetric minimizer. 
Here, we show in particular that convexity of $G$ is actually
a technical assumption in the sense that it can be dropped if $G$ is
of class $C^2$. Our proof of existence follows a path which is somewhat standard for 
nonconvex variational problems: 
First, we study the relaxed functional $E^{**}$, where $W$ is replaced by its convex envelope $W^{**}$ 
and show that $E^{**}$ has a radially symmetric minimizer $u$. In a second step, we prove that $u$ satisfies $W(\nabla u)=W^{**}(\nabla u)$ a.e.~by extending the ideas developed in 
\cite{CePe94a,CePe05}. As a consequence, $u$ also is a minimizer of the original problem.
Let us emphasize that this second step is by no means trivial. 
Of course, if the functional is restricted to the class
of radially symmetric functions, it can be rewritten as a single integral 
which in our case leads to
\begin{align}\label{E1d}
	\tilde{E}(u):=\int_0^R r^{N-1}\tilde{W}(u')+r^{N-1}G(u)\,dr,
\end{align}
where $\tilde{W}(\pm\abs{\cdot})=W(\cdot)$. 
Still, the available results in the one--dimensional case 
cannot be applied. This is inhibited by the lack of a boundary condition at $r=0$ and the 
singular weight $r^{N-1}$. Even worse, if one 
is willing to ignore the aforementioned problems for the time being, the 
main conditions on the integrand entailing attainment (the one of Theorem 1.2 in \cite{Ray94a} or (C2**) in \cite{CaTah99a}, e.g.) 
fail to hold in general under our assumptions on $W$ and $G$. Here, the main problem arises from the explicit 
dependence on $r$ of the term containing the derivative $u'$ in \eqref{E1d}, despite its simple form. \\
%For instance, let us assume for simplicity that $G'<0$ on $\RR$. 
%Then, 
%to argue as in \cite{Ray87b}, we would have to show that
%$G'(\mu)-\frac{N-1}{r}(\tilde{W}^{**})'(t)\neq 0$ for every 
%$r \in (0,R)$, $\mu>0$ and $t<0$ such that
%$\tilde{W}(t)>\tilde{W}^{**}(t)$, cf.~Theorem 1.2 in \cite{Ray94a}.
%(Here, we consider only positive $\mu$ and negative $t$ because every radially symmetric minimizer $u$ of $E^{**}$ %satisfies $u>0$ and $u'<0$ on $(0,R)$ if $G'<0$ due to Theorem~\ref{1thmradmincon} below.)
%This is true if $\tilde{W}^{**}$ is constant on the detachment set $\{\tilde{W}>\tilde{W}^{**}\}$, 
%which in terms of $W$ means that \eqref{Wcaffine} has to be satisfied. 
%Further technical problems inhibiting the direct use of a one--dimensional result 
%are the lack of a boundary condition at $r=0$ and the 
%singularity caused by the factor $r^{N-1}$ at $r=0$.\\
A related problem on the annulus $a<r=\abs{x}<b$ is studied in \cite{Tah90a}. There, Dirichlet boundary conditions are prescribed which require that $u(a)<u(b)$, where at the same time $G'<0$ on $\RR$ is assumed. Note however that the latter implies that radially symmetric minimizers are strictly decreasing in radial direction if the inner boundary value is free as in our case. 
%Another drawback of the approach of 
%\cite{Tah90a} is that $G$ has to be sufficiently small (i.e, scaled with a sufficiently small factor) and the proof %does not provide an explicit bound. 

The question of symmetry of minimizers, or symmetry of so--called ground states
(positive solutions of variational problems having the least energy among all critical points)
has also received considerable attention in the literature, 
although almost exclusively for problems leading to elliptic equations of second order.
On symmetric domains, symmetry of minimizers or ground states can be obtained using rearrangement techniques (for an overview, see \cite{Ka85B} or \cite{Bae94a}) or reflection arguments. Results in this direction for example can be found in \cite{Lo96a,FluMue98a,Bro04a}. The method of moving planes also has been used with great success \cite{GiNiNi79a,GiNiNi81a,LiNi92b,LiNi93a,SeZou99a,DaPaRa99a}, in particular on unbounded domains with translation invariance which introduces extra difficulties. (Both lists are far from exhaustive.)
In both cases, the proof of symmetry of minimizers (respectively, ground states) is typically based on a maximum principle, to show that a suitable symmetric rearrangement of a minimizer (or a ground state) has to coincide with the original function. Alternatively, one can use characterizations of those functions $u$ 
whose symmetric rearrangement $\hat{u}$ has the same energy as $u$: for example, if $u\in W_0^{1,p}(B_1(0))$ ($p>1$) is nonnegative and $\hat{u}$ denotes its Schwartz symmetrization, then
$\int \abs{\nabla u}^p=\int \abs{\nabla \hat{u}}^p$ implies that either $u=\hat{u}$ or $u$ has a plateau of positive measure below the essential supremum of $u$ (cf.~\cite{BroZie88a}, this is used in \cite{FluMue98a}).
For the purpose of proving symmetry we can assume that $W$ is convex (but not strictly convex!), due to the relaxation theorem (e.g.~\cite{Da89B}, Chapter 5) which implies that every minimizer of $E$ also minimizes the relaxed functional $E^{**}$ where $W$ is replaced by its convex envelope $W^{**}$. 
Still, for both the functionals $E$ and $E^{**}$ considered here, the Euler--Lagrange equation is not elliptic,
since ellipticity, even in a degenerate sense as for example satisfied by the $p$--Laplacian, implies strict convexity of $W$. Hence the use of the maximum principle is out of question. 
If $E^{**}$ has a nonnegative minimizer $u$, then one minimizer is radially symmetric, because the Schwartz symmetrization $\hat{u}$ of $u$ satisfies $E^{**}(\hat{u})\leq E^{**}(u)$ (see for example \cite{Ba80B}).
%; if assumption \eqref{G1} below holds then we may assume that $u\geq 0$). 
Obtaining the symmetry of \emph{every} minimizer is more subtle, though. In particular, it is not difficult to see that the equality
$\int W^{**}(\nabla u)=\int W^{**}(\nabla \hat{u})$ no longer implies that $u=\hat{u}$, if $W^{**}$ is convex but constant on a nonempty open set (even if we assume that $u$ does not have plateaus). 
%The same problem also occurs for various other rearrangements (besides Schwartz symmetrization) which do not change %the measure of super--level sets.
If $G$ is convex and strictly monotone, this difficulty is overcome in \cite{CePe94a},
where a symmetric rearrangement is defined by averaging on concentric spheres.
The disadvantage of this method is that the minimizing property of the rearranged function can only be shown for
convex $G$, using Jensen's inequality.
The main idea in our proof of symmetry is to compare the energy of a given minimizer with the energies of a whole family of radially symmetric functions, obtained from the profiles of the original function 
along all straight lines connecting the center $0$ of $B_R(0)$ to a boundary point (cf.~Lemma~\ref{1lemsymreg}).
This approach also yields symmetry of one minimizer, even without the assumption that a given minimizer is nonnegative. Another advantage lies in the fact that we can also show symmetry of every minimizer provided that $G$ is strictly monotone, using neither strict convexity of $W$ or $W^{**}$
(which, as a byproduct, turns out to be sufficient, too) nor convexity of $G$. Moreover, this technique is purely variational and hence only requires minimal regularity assumptions.% on $W$ and $G$.

Finally we mention that under more restrictive conditions on $W$ and $G$, the global minimizer of $E$ can be obtained
as a singular limit of critical points of a sequence of regularized functionals containing the additional term $\frac{\eps}{2}(\Delta u)^2$ in the integrand, with small $\eps>0$ \cite{KroeKie06a}. In particular, this might provide a good framework for numerical investigations.

The rest of this paper is organized as follows: In the next section, general notation and the main assumptions on $W$ and $G$ are collected. The third section contains results for $E^{**}$, in particular sufficient conditions
for the symmetry of all minimizers, subsumed in Theorem~\ref{1thmradmincon}. They are used in Section~\ref{1secWNCSYM} in the proof of our 
main result, Theorem~\ref{1thmncradmin}, existence of a minimizer and symmetry of all minimizers for nonconvex $W$ 
(and nonconvex $G$).

The results of this paper were presented as a part of the author's PhD thesis \cite{diss05B}. 

\section{Preliminaries\label{1secPre}}
Given two vectors $\xi,\eta\in \RR^N$, $\xi\cdot\eta$ is their
euclidean scalar product. The euclidean norm in $\RR^N$ as well as
the modulus in $\RR$ are denoted by $\abs{\cdot}$, and 
$B_R(a)$ is the open ball in $\RR^N$ 
with radius $R>0$ and center $a\in \RR^N$.
Moreover, $S^{N-1}$ is the boundary of the unit ball in $\RR^N$,
equipped with the $(N-1)$--dimensional Hausdorff measure (if measure--theoretic structure
is needed). The Lebesgue measure and the $s$-dimensional Hausdorff measure of a measurable set $A\subset \RR^N$
are denoted by $\LM_N(A)$ and $\HM_s(A)$, respectively. For the surface area of the sphere, we use the abbreviation $\HMS:=\HM_{N-1}(S^{N-1})$. 
The symbol $\norm{\cdot}$ is used for norms in function spaces, where the
corresponding space will be given in the index, for example
$\norm{\cdot}_{L^p(\Omega)}$. As usual, Sobolev spaces 
of real--valued functions in
$L^p(\Omega)$ which are $k$ times weakly differentiable in
$L^p(\Omega)$ are denoted by $W^{k,p}(\Omega)$, and
$W_0^{k,p}(\Omega)\subset W^{k,p}(\Omega)$ stands for the closure of
the set of infinitely times differentiable functions with compact
support in $\Omega$ (i.e., $C_0^{\infty}(\Omega)$) with respect to
the $W^{k,p}$-norm. The domain $\Omega$ is omitted if it is clear from the context. 
Finally, with a slight abuse of notation, the same letter is used both for a radially symmetric function $u:B_R(0)\to \RR$ and its "profile" $u:(0,R)\to \RR$ related by $u(\abs{x})=u(x)$. In that context,
$u'(\abs{x})=\partial_r u(x):=\nabla u(x)\cdot \frac{x}{\abs{x}}$ denotes the first derivative in radial direction.

Our basic assumptions on $W$ and $G$ are as follows.

Assumptions on $W$:
\begin{alignat*}{2}
    {}& \text{(Regularity)} &&\quad W:\RR^N\to \RR~\text{is continuous,}
    \label{W0}\tag{\ensuremath{W_0}}\\
    {}& \text{(Coercivity)} &&\quad W(\xi)\geq \nu_1\abs{\xi}^{p}-C,
    \label{W1}\tag{\ensuremath{W_1}}\\
    {}& \text{(Growth)} &&\quad \abs{W(\xi)}\leq \nu_2\abs{\xi}^{p}+C,
    \label{W2}\tag{\ensuremath{W_2}}
\intertext{%\end{alignat*}
for every $\xi\in\RR^N$, where $p>1$, $\nu_1\leq \nu_2$ and $C$ are
positive real constants. Furthermore, we assume that $W$ is invariant under rotations:
%\begin{alignat*}{2}
}
    {}& \text{(Symmetry)} &&\quad
    \begin{array}{ll}
        W(\xi)=\tilde{W}(\abs{\xi}),~\text{where}\\
        \text{$\tilde{W}:\RR\to\RR$ is an even function of class $C^0$.}
    \end{array}
    \label{Wsym}\tag{\ensuremath{W_3}}
\end{alignat*}
Note that in particular we do not require $W$ to be convex. If $W$
is nonconvex, the %so-called Maxwell 
points $M$ and $-M$, defined
below, are of special interest:
\begin{equation} \label{WMpoints}
    M:=\max\left\{t\geq 0~\left|~
    \tilde{W}(t)=\min_{s\in\RR}\tilde{W}(s)\right\}\right.\geq 0.
\end{equation}
The case $M=0$ occurs if and only if $0$ is the unique minimizer of $\tilde{W}$.
Another important object in the study of nonconvex $\tilde{W}$ is
its convex envelope (or bipolar)
\begin{align} \label{WWc}
    \tilde{W}^{**}(s):=\sup\left\{V(s)\,\left|\,
    \text{$V:\RR\to\RR$~is convex and $V\leq
    \tilde{W}$}\right.\right\},~s\in\RR.
\end{align}
If $\tilde{W}$ is continuous or of class $C^1$ then the
same holds for $\tilde{W}^{**}$. Furthermore, $\tilde{W}^{**}$ is convex
and affine on any connected component of the set where it differs from
$\tilde{W}$. Also note that $\tilde{W}^{**}$ is constant on $[-M,M]$, and
$\tilde{W}(\pm M)=\tilde{W}^{**}(\pm M)$. 
However, the detachment set $\{\tilde{W}^{**}>\tilde{W}\}$ might contain intervals which are not subsets of $(-M,M)$, in fact even countably many are allowed.
\begin{rem}\label{remskewW}
If $W$ is replaced by a function $\hat{W}$ of the form
$\hat{W}(\xi)=W(\xi)+a\cdot \xi$, where $a\in \RR^N$ is an arbitrary
fixed vector, then the energy $E$ remains unchanged, by virtue
of the Gauss Theorem. In particular, all critical points
persist. This invariance can be used to treat some
cases when $W$ is ``skew'', as opposed to our assumption \eqref{Wsym}.
\end{rem}
Assumptions on $G$:
%}
\begin{alignat*}{2}
    {}& (Regularity) &&\quad \begin{array}{l}G:\RR\to\RR~\text{is continuous,}
    \end{array}
    \label{G0}\tag{\ensuremath{G_0}}\\
    {}& (Growth) &&\quad 
    \begin{array}{ll}
    G(\mu)\geq -\nu_3 \abs{\mu}^{p-\varrho}-C,&\\
    G(\mu)\leq \phantom{-}
    \nu_4 \abs{\mu}^{p^*-\varrho}+C&~\text{if $p<N$},~\text{and}\\
    G(\mu)\leq \phantom{-}
    \nu_4 \abs{\mu}^{\tilde{p}}+C&~\text{if $p=N$, for a $\tilde{p}<\infty$,}
    \end{array}
%    \begin{array}
%    G(\mu)\geq -\nu_3 \abs{\mu}^{p-\varrho}-C,%\\
%    G(\mu)\leq \phantom{-}
%    \nu_4 \abs{\mu}^{p^*-\varrho}+C~\text{if $p<N$},~\text{and} \\
%    G(\mu)\leq \phantom{-}
%    \nu_4 \abs{\mu}^{\tilde{p}}+C~\text{if $p=N$, for an $\tilde{p}<\infty$,}
%    \end{array}
    \label{G2}\tag{\ensuremath{G_1}}
\intertext{for every $\mu\in \RR$, where $C,\nu_3,\nu_4\geq
0$ and $\varrho\in (0,p]$ are constants and $p^*:=\frac{pN}{N-p}$ is the critical Sobolev exponent.
If $M>0$, we also need (partial) monotonicity of $G$:}
    {}& (Shape) &&\quad \begin{array}{l}
    G~\text{is decreasing on $[0,\infty)$ and} \\
    G(\mu)\leq G(-\mu)~\text{whenever}~\mu>0,
    \end{array}
    \label{G1}\tag{\ensuremath{G_2}}
\intertext{An immediate consequence of \eqref{G1} and $\eqref{Wsym}$ is that $E(\abs{u})\leq E(u)$ for every $u\in W_0^{1,p}(B_R(0))$. In particular, whenever $u$ is a minimizer, the nonnegative function 
$\abs{u}$ is a minimizer, too. To obtain symmetry of all minimizers, strict monotonicity of $G$
plays a crucial role:}
{}& (Shape') &&\quad \begin{array}{l}
    G~\text{is strictly decreasing on $[0,\infty)$ and} \\
    G(\mu)\leq G(-\mu)~\text{whenever}~\mu>0,
    \end{array}
    \label{G1p}\tag{\ensuremath{G_2'}}
\end{alignat*}
%Furthermore, w.~l.~o.~g., we shift $G$ to satisfy
%\begin{equation*}
%    G(0)=0. \label{G3}\tag{\ensuremath{G_{3}}}
%\end{equation*}
\begin{rem}
If \eqref{G1} is violated, a minimizer need not exist. For instance,
it is well known that the infimum of
$\int_{B_R(0)} \big[(\abs{\nabla u}-1)^2+u^2\big]\,dx$, $u\in W_0^{1,2}$,
is zero and it is not attained. More generally,
if $\tilde{W}(0)>\min \tilde{W}$ and $G(\mu)>G(0)$ for every $\mu\neq 0$, 
then $\inf E=\min \tilde{W}+G(0)$ and it is not attained.
\end{rem}
\begin{rem}
If $G$ does not satisfy \eqref{G1} (or \eqref{G1p}, respectively), but $\hat{G}:\RR \to \RR$, $\mu\mapsto G(-\mu)$ does (for example, if $G$ is strictly increasing on $\RR$),
our results below still hold with obvious changes. Just consider
$\hat{E}(u):=E(-u)=\int_{B_R(0)} [\tilde{W}(\abs{\nabla u})+\hat{G}(u)]\,dx$ instead of $E$.
\end{rem}
In view of \eqref{W1} and \eqref{G2}, it is natural to consider $E$ as a functional on $W_0^{1,p}(\Omega)$.
A first consequence of the conditions given above is the following
\begin{prop}{(Coercivity of $E$)}\label{1propcoerc} 
Assume \eqref{W0}--\eqref{W2}, \eqref{G0} and \eqref{G2}. Then
$E:W_0^{1,p}(B_R(0))\to \RR$ is well defined and coercive in the
sense that
\begin{align}\label{1E0coerc}
    E(u)&\geq \tilde{\nu}\norm{u}_{W^{1,p}}^p-\tilde{C},
\end{align}
for every $u\in W_0^{1,p}(B_R(0))$, where $\tilde{C}$ and $\tilde{\nu}>0$
are constants independent of $u$.
\end{prop}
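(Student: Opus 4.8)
The plan is to bound $E$ from below using the coercivity assumption \eqref{W1} on $W$ together with the lower growth bound in \eqref{G2} on $G$, and then to absorb the resulting lower--order term by means of \Poincare's and Young's inequalities. First I would apply \eqref{W1} and the first line of \eqref{G2} pointwise and integrate, obtaining, for every $u\in W_0^{1,p}(B_R(0))$,
\begin{equation*}
	E(u)\geq \nu_1\norm{\nabla u}_{L^p}^p-\nu_3\norm{u}_{L^{p-\varrho}}^{p-\varrho}-2C\,\LM_N(B_R(0)).
\end{equation*}
The decisive feature is that the exponent $p-\varrho$ governing the negative contribution of $G$ is strictly less than $p$ (since $\varrho\in(0,p]$), so this term is of genuinely lower order and can be controlled by the leading gradient term.

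To make this precise I would argue as follows. Because $B_R(0)$ has finite measure, \Hoelder's inequality yields the continuous embedding $L^p(B_R(0))\hookrightarrow L^{p-\varrho}(B_R(0))$, so that $\norm{u}_{L^{p-\varrho}}\leq c_1\norm{u}_{L^p}$ with $c_1$ depending only on $N,R,p,\varrho$ (in the degenerate case $p-\varrho=0$ the offending term is a constant and the estimate is trivial). \Poincare's inequality on $W_0^{1,p}(B_R(0))$ then gives $\norm{u}_{L^p}\leq c_2\norm{\nabla u}_{L^p}$, whence $\norm{u}_{L^{p-\varrho}}^{p-\varrho}\leq c_3\norm{\nabla u}_{L^p}^{p-\varrho}$. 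Since $p-\varrho<p$, Young's inequality permits the estimate $\nu_3 c_3\norm{\nabla u}_{L^p}^{p-\varrho}\leq \tfrac{\nu_1}{2}\norm{\nabla u}_{L^p}^p+c_4$ for a constant $c_4$ independent of $u$. Combining these yields $E(u)\geq \tfrac{\nu_1}{2}\norm{\nabla u}_{L^p}^p-c_5$, and as the gradient seminorm is equivalent to the full $W^{1,p}$ norm on $W_0^{1,p}(B_R(0))$ (again by \Poincare), this is exactly \eqref{1E0coerc} with suitable $\tilde{\nu}>0$ and $\tilde{C}$.

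It remains to verify that $E$ is well defined, that is, $E(u)\in\RR$ for each admissible $u$. The coercivity bound just derived shows the negative part of the integrand is integrable; for the positive part I would use the upper bounds \eqref{W2} and the remaining lines of \eqref{G2}. Indeed \eqref{W2} gives $\int W(\nabla u)\leq \nu_2\norm{\nabla u}_{L^p}^p+C\,\LM_N(B_R(0))<\infty$, while the upper growth bound on $G$ together with the Sobolev embedding ($W_0^{1,p}\hookrightarrow L^{p^*}$ for $p<N$, $W_0^{1,p}\hookrightarrow L^q$ for every finite $q$ when $p=N$, and $W_0^{1,p}\hookrightarrow L^\infty$ when $p>N$) ensures $\int G(u)<\infty$, since the relevant exponent ($p^*-\varrho$, $\tilde{p}$, or any finite value) is admissible for the corresponding embedding. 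The main obstacle here is thus not conceptual but one of bookkeeping: one must confirm that $p-\varrho$ is strictly subcritical so the Young absorption works, and that the growth exponents in \eqref{G2} remain below the Sobolev threshold so $\int G(u)$ converges. Both are guaranteed by the standing hypotheses $\varrho\in(0,p]$ and the precise form of \eqref{G2}, and no further ideas are needed.
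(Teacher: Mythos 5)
Your proposal is correct and follows essentially the same route as the paper: apply \eqref{W1} and the lower bound in \eqref{G2} pointwise, integrate, control the term $\norm{u}_{L^{p-\varrho}}^{p-\varrho}$ via \Hoelder's and \Poincare's inequalities, and absorb it into the leading term using $p-\varrho<p$. The paper merely compresses the final absorption step (your Young's inequality argument) into the remark that $p-\varrho<p$ ``immediately implies'' the claim, and likewise leaves the well-definedness check to the reader, which you have spelled out correctly.
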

\begin{proof}
Using the growth conditions, it is not difficult to show that $E$
is well defined. Furthermore, for $u\in W_0^{1,p}(B_R(0))$, by
virtue of \eqref{W1}, \eqref{G2}, \Hoelder's inequality and
\Poincare's inequality we have that
\begin{align*}
    E(u)&\geq \int_{B_R(0)} \left[\nu_1\abs{\nabla u}^{p}-\nu_3\abs{u}^{p-\varrho}-2C\right]\,dx\\
    &\geq \tilde{\nu}_1\norm{u}_{W^{1,p}}^p-\tilde{\nu}_3\norm{u}_{W^{1,p}}^{p-\varrho}-2C,
\end{align*}
where $\tilde{\nu}_1$ and $\tilde{\nu}_3$ are positive constants depending on
$\nu_1$ and $\nu_3$, respectively, as well as on $p$, $\varrho$ and
$\LM_N({B_R(0)})$. Since $p-\varrho<p$, this immediately implies
\eqref{1E0coerc}.
\end{proof}
%%%%%%%%%%%%%%%%%%%%%%%%%%%%%%%%%%%%%%%%%%%%%%%%%%%%%%%%%%%%%%%%%%%%%%%%%%%%%%%
%%%%%%%%%%%%%%%%%%%%%%%%%%%%%%%%%%%%%%%%%%%%%%%%%%%%%%%%%%%%%%%%%%%%%%%%%%%%%%%
%%%%%%%%%%%%%%%%%%%%%%%%%%%%%%%%%%%%%%%%%%%%%%%%%%%%%%%%%%%%%%%%%%%%%%%%%%%%%%%
\section{Properties of minimizers in the convex case\label{1secExSym}}
In the case of convex $W$, the functional $E$ is weakly lower
semicontinuous, and since it is also coercive by
Lemma~\ref{1propcoerc}, $E$ has a minimum by the direct methods in
the calculus of variations (cf.~\cite{Da89B}, e.g.) in $W_0^{1,p}$. 
%Of course even in this case it is not immediately clear that the
%minimizer has radial symmetry if $W$ itself is radially symmetric in
%the sense of \eqref{Wsym}. 
This section provides several auxiliary
results which are employed to show existence and symmetry of
minimizers for nonconvex $W$ in Section~\ref{1secWNCSYM}. For this
purpose, we will apply the assertions below to the relaxed
functional 
\begin{equation}\label{E0c}%\tag{\ensuremath{E^{**}}}
    E^{**}(u):=\int_{B_R(0)} \left[W^{**}(\nabla u)+G(u)\right]\,dx,
\end{equation}
where $W$ is replaced by its convex envelope $W^{**}$. As a
consequence, we actually could assume that $W=W^{**}$ within this
section. However, the arguments used here do not really exploit
convexity of $W$ (although convexity is always sufficient) which
guarantees the existence of a minimizer. Thus we prefer to use a
more general setting, assuming just those properties of $W$ which
are really needed for the proofs.
%and ignoring the fact that in some
%cases the results might be empty in the sense that they provide
%properties for minimizers which do not exist. 
%Note however that one
%does not achieve a true generalization of the case of convex $W$ in
%this way since a global minimizer for nonconvex $W$ also has to
%minimize the convexified functional where $W$ is replaced with its
%convex envelope $W^{**}$, due to the Relaxation Theorem.
As a first step, we discuss the question of radial symmetry of
minimizers, assuming symmetry of $W$. 
%As mentioned above,
%the existence of a minimizer is quite clear (as long as $W$ is
%convex), thus we only have to show that one (or, preferably, every)
%minimizer is radially symmetric. 
For this purpose, we construct
radially symmetric functions in a suitable way from a given,
possibly asymmetric minimizer. The following lemma provides
sufficient regularity of those functions.
\begin{lem}\label{1lemsymreg}
Let $u$ be (a fixed representative of an equivalence class) in $W^{1,p}(B_R(0))$ 
with a $p\in [1,\infty)$. Then,
for almost every direction $\theta\in S^{N-1}$, the radially symmetric function
\begin{align} \label{1urearrange}
    u_\theta:B_R(0)\to \RR,~u_\theta(x):=u(\abs{x}\theta)
\end{align}
(respectively, its equivalence class) is an element of $W^{1,p}(B_R(0))$. 
If $u\in W_0^{1,p}(B_R(0))$,
then we also have $u_\theta\in W_0^{1,p}(B_R(0))$ for
a.~e.~$\theta\in S^{N-1}$. In any case,
\begin{align} \label{1lsymreg00}
    \nabla u_\theta(x)=\left(\theta\cdot \nabla u(\abs{x}\theta)\right)\frac{x}{\abs{x}},
\end{align}
and in particular,
\begin{align} \label{1lsymreg01}
    \abs{\nabla u_\theta(x)}\leq \abs{\nabla u(\abs{x}\theta)},
\end{align}
for almost every $x\in B_R(0)$ and $\theta\in S^{N-1}$.
\end{lem}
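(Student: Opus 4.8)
The plan is to reduce everything to the one--dimensional radial profiles $g_\theta(r):=u(r\theta)$, $r\in(0,R)$, for which $u_\theta(x)=g_\theta(\abs{x})$, and to establish the claims by approximation rather than by a direct distributional computation on $u_\theta$. The reason for preferring approximation is that the polar change of coordinates $(r,\theta)\mapsto r\theta$ degenerates at the origin, and a careless passage to the radial function could in principle create spurious distributional mass concentrated at $0$; working with smooth approximants rules this out. For a function $v\in C^1$ one checks directly by the chain rule that, for $x\neq 0$, $\nabla v_\theta(x)=\big(\theta\cdot\nabla v(\abs{x}\theta)\big)\tfrac{x}{\abs{x}}$, so that $v_\theta$ is locally Lipschitz on all of $B_R(0)$ (near the origin because $r\mapsto v(r\theta)$ is $C^1$ and $\abs{\cdot}$ is Lipschitz), and $\abs{\nabla v_\theta(x)}=\abs{\theta\cdot\nabla v(\abs{x}\theta)}\leq\abs{\nabla v(\abs{x}\theta)}$ by the Cauchy--Schwarz inequality. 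This already yields \eqref{1lsymreg00} and \eqref{1lsymreg01} in the smooth case, and these identities will survive the limit.

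For the general case I would pick, by the Meyers--Serrin theorem, functions $u_n\in C^\infty(B_R(0))\cap W^{1,p}(B_R(0))$ with $u_n\to u$ in $W^{1,p}$. Writing the $L^p$--norms in polar coordinates, $\int_{B_R(0)}\abs{w}^p\,dx=\int_{S^{N-1}}\int_0^R\abs{w(r\theta)}^p r^{N-1}\,dr\,d\HM_{N-1}(\theta)$, and applying this to $w=u_n-u$ and to $w=\nabla u_n-\nabla u$, Tonelli's theorem shows that the inner radial integrals tend to $0$ in $L^1(S^{N-1})$. Hence there is a single subsequence $(u_{n_k})$ such that, for almost every $\theta$, both $\int_0^R\abs{u_{n_k}(r\theta)-u(r\theta)}^p r^{N-1}\,dr\to 0$ and $\int_0^R\abs{\nabla u_{n_k}(r\theta)-\nabla u(r\theta)}^p r^{N-1}\,dr\to 0$. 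The same polar formula applied to $u_n$ itself shows that for a.e.\ $\theta$ each $(u_n)_\theta$ is a locally Lipschitz function with finite $L^p$--norm and finite $L^p$--gradient, hence $(u_n)_\theta\in W^{1,p}(B_R(0))$. Undoing the coordinate change, the two convergences above say that for a.e.\ $\theta$ the functions $(u_{n_k})_\theta$ converge in $L^p(B_R(0))$ to $u_\theta$ while their gradients $\big(\theta\cdot\nabla u_{n_k}(\abs{x}\theta)\big)\tfrac{x}{\abs{x}}$ converge in $L^p(B_R(0))$ to $\big(\theta\cdot\nabla u(\abs{x}\theta)\big)\tfrac{x}{\abs{x}}$ (using $\abs{\theta}=1$, so the angular factor is $1$). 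Thus $\big((u_{n_k})_\theta\big)_k$ is Cauchy in $W^{1,p}(B_R(0))$, and by completeness its limit $u_\theta$ lies in $W^{1,p}(B_R(0))$ with weak gradient $\big(\theta\cdot\nabla u(\abs{x}\theta)\big)\tfrac{x}{\abs{x}}$, which is \eqref{1lsymreg00}; the pointwise bound \eqref{1lsymreg01} is again Cauchy--Schwarz.

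For the case $u\in W_0^{1,p}(B_R(0))$ I would instead approximate by $\varphi_n\in C_0^\infty(B_R(0))$ with $\varphi_n\to u$ in $W^{1,p}$. Since each $\varphi_n$ vanishes on an annulus $R-\delta_n<\abs{x}<R$, its profile satisfies $\varphi_n(r\theta)=0$ for $r$ close to $R$, so $(\varphi_n)_\theta$ is a compactly supported, locally Lipschitz function on $B_R(0)$ and therefore belongs to $W_0^{1,p}(B_R(0))$. Running the same subsequence-and-completeness argument, $u_\theta$ is for a.e.\ $\theta$ a $W^{1,p}$--limit of elements of $W_0^{1,p}(B_R(0))$; since the latter space is closed in $W^{1,p}(B_R(0))$, we obtain $u_\theta\in W_0^{1,p}(B_R(0))$.

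The one genuinely delicate point, and the reason the argument is phrased through approximation, is the behaviour at the coordinate singularity $r=0$: one must ensure that the radial extension $u_\theta$ does not carry a distributional contribution at the origin that is invisible to the pointwise gradient formula. Because every smooth (or $C_0^\infty$) approximant has a profile that is genuinely Lipschitz across $0$, no such contribution can appear for the approximants, and the completeness step forbids it from being created in the $W^{1,p}$--limit. The remaining ingredients --- measurability of the sliced integrals, extraction of one common subsequence valid for almost every $\theta$, and the polar coordinate decomposition of the $L^p$--norm --- are routine consequences of Tonelli's theorem.
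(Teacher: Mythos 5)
Your proof is correct and follows essentially the same route as the paper: approximate $u$ by smooth functions, observe that the radial rearrangement of a smooth function is locally Lipschitz with the asserted gradient, pass to polar coordinates and use Fubini/Tonelli to extract a subsequence along which the rearranged approximants converge in $W^{1,p}$ for almost every $\theta$, and conclude by completeness. The only difference is cosmetic: you treat the $W^{1,p}$ case via Meyers--Serrin and the $W_0^{1,p}$ case via $C_0^\infty$ approximants separately, whereas the paper writes out only the $W_0^{1,p}$ case and declares the other one an obvious modification.
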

\begin{proof}
We will only give the proof for $u\in W_0^{1,p}(B_R(0))$, 
the modifications for $u\in W^{1,p}(B_R(0))$
are obvious. \\%(note that in this case approximation with smooth
%functions on $\overline{B_R(0)}$ is possible, too, since the
%boundary of the ball is regular enough).\\
Since $u$ is an element of $W_0^{1,p}(B_R(0))$, it can be
approximated with a sequence $u^{(k)}\in C_0^\infty(B_R(0))$, $k\in
\NN$, such that $u^{(k)}\to u$ in $W^{1,p}$. Obviously the radially symmetric functions
$u^{(k)}_\theta$ (obtained from the profiles of $u^{(k)}$ analogously to \eqref{1urearrange})
are elements of $C^\infty(B_R(0)\setminus\{0\})\cap
C(B_R(0))$ and vanish in a vicinity of $\partial B_R(0)$, for every
$k\in\NN$ and every direction $\theta\in S^{N-1}$. Since $\nabla u^{(k)}(0)$ is finite
for fixed $k$, we also have
$u^{(k)}_\theta\in W_0^{1,p}(B_R(0))$. The assertion now follows 
once we show that $u^{(k)}_\theta\to
u_\theta$ in $L^p$ and that $\nabla
u^{(k)}_\theta\to \nabla u_\theta$ in $L^p$ for almost every
$\theta\in S^{N-1}$, where $\nabla u_\theta$ is given by
\eqref{1lsymreg00}. This can be observed in the following way: By
introducing radial coordinates, we have
\begin{align*}
    & \int_{S^{N-1}} \left(\int_{B_R(0)}
    \abs{\nabla u^{(k)}_\theta(x)-\left(\theta\cdot \nabla
    u(\abs{x}\theta)\right)\frac{x}{\abs{x}}}^p\,dx \right)\, d\theta\\
    & = \int_{S^{N-1}} \int_{S^{N-1}}\int_0^R
    \abs{\nabla u^{(k)}_\theta(r\psi)-\left(\theta\cdot \nabla
    u(r\theta)\right)\psi}^pr^{N-1}\,dr\,d\psi\,d\theta\\
    & = \int_{S^{N-1}}\int_{S^{N-1}}\int_0^R
    \abs{\left(\theta\cdot\nabla u^{(k)}(r\theta)\right)\psi-\left(\theta\cdot \nabla
    u(r\theta)\right)\psi}^p\,r^{N-1}dr\,d\psi\,d\theta \\
    %& \hspace{8ex}\text{since $u^{(k)}_\theta$ is radially symmetric} \\
    & \leq \int_{S^{N-1}}\int_{S^{N-1}}\int_0^R
    \abs{\nabla u^{(k)}(r\theta)-\nabla u(r\theta)}^p\,r^{N-1}dr\,d\psi\,d\theta \\
    & = \HMS \int_{B_R(0)} \abs{\nabla u^{(k)}(x)-\nabla u(x)}^p\,dx,
\end{align*}
due to Fubini's Theorem. Since $\nabla u^{(k)}$ converges to $\nabla u$ in $L^p(B_R(0))$, 
this entails that (up to a subsequence)
$\nabla u^{(k)}_\theta\to \nabla u_\theta$ in $L^p(B_R(0))$ as $k\to \infty$, for a.e.~$\theta\in S^{N-1}$.
By a similar calculation we also obtain that $u^{(k)}_\theta\to u_\theta$ in $L^p$ for a.e. $\theta$.
\end{proof}
\begin{rem}
Analogous results about regularity properties of the restrictions of a 
(representative of a) Sobolev function to parallel lines which form a partition
of the domain can be found in \cite{EvGa92B}. However the results
presented there are not directly applicable in the situation of the
lemma above because the lines in radial direction meet at the
origin, thus behaving (mildly) singular.
\end{rem}
As an technical tool in order to prove the symmetry of a whole group
of minimizers (even all for suitable $W$ and $G$), we need the following
elementary characterization of radially symmetric functions:
\begin{lem}\label{1lemradsymonly}
Assume that $u\in W^{1,1}_{\loc}(B_R(0))$ satisfies
\begin{align}\label{1lrso0}
    \nabla u(x)=\lam(x) x\quad\text{for a.~e.~$x\in B_R(0)$},
\end{align}
where $\lam=\lam(x)\in\RR$ is a measurable scalar factor. Then $u$ is radially
symmetric.
\end{lem}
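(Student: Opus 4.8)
The plan is to recast the pointwise condition as the statement that $u$ is annihilated by every infinitesimal rotation, and then to transfer this to a smooth function by mollification, where the conclusion becomes elementary. For $1\le i<j\le N$ set $L_{ij}u:=x_i\,\partial_j u-x_j\,\partial_i u$, the angular derivative associated with rotations in the $(x_i,x_j)$-plane. Assumption \eqref{1lrso0} is equivalent to $L_{ij}u=0$ a.e.\ for every pair $(i,j)$: if $\nabla u=\lam x$ then $L_{ij}u=\lam\,(x_i x_j-x_j x_i)=0$, and conversely $L_{ij}u=0$ for all pairs forces $\nabla u$ to be parallel to $x$ wherever $x\neq 0$.

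First I would mollify. Fix a radially symmetric mollifier $\rho_\eps$ and put $u_\eps:=u*\rho_\eps\in C^\infty(B_{R-\eps}(0))$, so that $\nabla u_\eps=(\nabla u)*\rho_\eps$ there. The key computation is that $L_{ij}$ commutes with convolution by a radial kernel: integrating by parts one finds
\begin{align*}
    L_{ij}u_\eps-(L_{ij}u)*\rho_\eps
    =\int u(x-y)\,L_{ij}\rho_\eps(y)\,dy,
\end{align*}
and the right-hand side vanishes because $\rho_\eps$ is radial, so that $L_{ij}\rho_\eps\equiv 0$. Hence $L_{ij}u_\eps=(L_{ij}u)*\rho_\eps=0$ on $B_{R-\eps}(0)$ for every pair $(i,j)$.

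Next I would conclude symmetry of the smooth approximants. Since $L_{ij}u_\eps=0$ for all $i<j$, the gradient $\nabla u_\eps(x)$ is parallel to $x$ at every $x\neq 0$; equivalently, the tangential part of $\nabla u_\eps$ on each sphere $\{\abs{x}=r\}$ vanishes. As these spheres are connected for $N\geq 2$, a smooth function with vanishing tangential gradient is constant on each of them, so $u_\eps$ is radially symmetric on $B_{R-\eps}(0)$. Finally, letting $\eps\to 0$ we have $u_\eps\to u$ in $L^1_{\loc}(B_R(0))$, and since each rotation is measure-preserving, an $L^1_{\loc}$-limit of radially symmetric functions is again radially symmetric (up to the usual a.e.\ identification); this yields the claim.

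The step I expect to be delicate is the commutation identity, since $L_{ij}$ carries an explicit factor of $x$ and therefore does not commute with translation; the saving feature is precisely that the correction term reassembles into $u*(L_{ij}\rho_\eps)$, which is killed by radiality of $\rho_\eps$. The only other point requiring a word is the behavior at the origin, but that set has measure zero and is harmless, as the mollified functions are smooth and their radial symmetry extends across $0$ by continuity. (Alternatively, one could avoid mollification by passing to polar coordinates in each coordinate plane and invoking the absolute-continuity-on-lines characterization of $W^{1,1}$ in the angular variable, at the cost of separately handling the lower-dimensional singular axes.)
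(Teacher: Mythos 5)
Your argument is correct, and it reaches the conclusion by a genuinely different technical route than the paper. The paper restricts $u$ to the spheres $\partial B_r(0)$: using smooth approximation and Fubini's theorem (as in Lemma~\ref{1lemsymreg}) it shows that $\theta\mapsto u(r\theta)$ lies in $W^{1,1}(S^{N-1})$ for a.e.~$r$, computes that its tangential differential is $r^2\lam(r\theta)(\theta\cdot h)=0$, and concludes constancy on a.e.~sphere. You instead encode the hypothesis as $L_{ij}u=0$ for the angular fields $L_{ij}=x_i\partial_j-x_j\partial_i$ and transfer this to a radial mollification via the commutation identity $L_{ij}(u*\rho_\eps)=(L_{ij}u)*\rho_\eps+u*(L_{ij}\rho_\eps)$; the integration by parts behind it is sound (the potentially troublesome term $\delta_{ij}\rho_\eps$ vanishes because $i\neq j$), and $L_{ij}\rho_\eps\equiv 0$ by radiality of the kernel, so the smooth approximants are genuinely radial and the property survives the $L^1_{\loc}$ limit. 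What your route buys is that all the measure-theoretic delicacy is concentrated in one algebraic identity for convolutions, avoiding any discussion of Sobolev regularity of restrictions to spheres; what the paper's route buys is that it reuses machinery (the Fubini slicing of Lemma~\ref{1lemsymreg}) already needed elsewhere. Two points you wave at deserve one line each if written out: the mollification argument should be run on $B_{R'}(0)$ with $\eps<R-R'$ and $R'\uparrow R$ since $u$ is only $W^{1,1}_{\loc}$, and the passage to the limit is cleanest via an a.e.-convergent subsequence together with Fubini in polar coordinates (on a.e.~sphere the constants $u_\eps\big|_{\partial B_r}$ converge, so the limit is a.e.~equal to a function of $\abs{x}$). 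Neither is a gap.
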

\begin{proof}
Using approximation with smooth functions and Fubini's Theorem as in Lemma~\ref{1lemsymreg}, it
is not difficult to show that the functions $\theta\mapsto
u_r(\theta):=u(r\theta)$, $S^{N-1}\to \RR$, are in
$W^{1,1}(S^{N-1})$ for almost every $r\in (0,R)$.
Furthermore,
\begin{align*}
    D u_r(\theta)h=r Du(r\theta)h~\text{for $h\in T_\theta
    S^{N-1}$}.
\end{align*}
Due to \eqref{1lrso0},
\begin{align*}
    D u_r(\theta)h=r^2\lam(r\theta)(\theta\cdot h)=0,
\end{align*}
since the tangential vector $h\in T_\theta S^{N-1}\subset \RR^N$ is
always orthogonal to $\theta$. Thus $u_r$ is constant on $S^{N-1}$
for almost every $r$. Accordingly, $u$ is constant on the spheres
$\partial B_r(0)$ for almost every $r\in (0,R)$, which entails
radial symmetry.
\end{proof}
With the aid of Lemma~\ref{1lemsymreg} we now can show radial
symmetry of minimizers. %The main tool is the rearrangement 
%\eqref{1urearrange}
%of a given minimizer to a family of radially
%symmetric functions as defined in Lemma~\ref{1lemsymreg}. 
%In contrast
%to this, the method used in \cite{CePe94a} is based on rearranging by averaging on concentric
%spheres. This, too, yields an admissible radially symmetric function. 
%The disadvantage is that its minimizing property can only be shown
%for convex $G$ (using Jensen's inequality), whereas the approach used here does
%not require any such conditions on the shape of $G$. 
\begin{thm} \label{1thmradmincon}
Assume \eqref{W0}, \eqref{W1}, \eqref{Wsym}, \eqref{G0} and \eqref{G2}. Furthermore assume
that $\tilde{W}$ is increasing on $[0,\infty)$ and that $E$ has a global minimizer
$u$ in $W_0^{1,p}$. Moreover, let $M_0\geq 0$ denote a constant such that $\tilde{W}$ is constant on $[-M_0,M_0]$ (note that $M_0=0$ is allowed).
Then we have the following:
\vspace*{-2ex}
\begin{enumerate}
\item At least one global minimizer $u$ of $E$ is radially symmetric. If \eqref{G1} holds, then $u$ can be chosen in such a way that $u\geq 0$ and $\partial_r u\leq -M_0$ almost everywhere.
\item Any minimizer $u$ such that
\begin{align} \label{1trmc0}
    \abs{\nu}>\abs{\partial_r{u}(x)}~\text{implies that}~\tilde{W}(\abs{\nu})>\tilde{W}(\abs{\partial_r{u}(x)}),
\end{align}
for every $\nu\in \RR$ and a.~e.~$x$, is radially symmetric. 
\item Assume in addition that \eqref{G1p} holds. Then every minimizer $u$ of $E$ satisfies
\eqref{1trmc0} and thus is radially symmetric. 
Furthermore, $u$ is either nonnegative or nonpositive in $B_R(0)$.
Here, the latter case can occur only if~\,$G(u)=G(-u)$,
so that $\abs{u}$ is a minimizer, too, then.
If $u$ is nonnegative then we have $\partial_r u\leq -M_0$ almost
everywhere; in particular, $u$ is decreasing in radial direction.
\end{enumerate}
\vspace*{-2 ex}
%Here, $\partial_r u(x):=\nabla u(x)\cdot \frac{x}{\abs{x}}$ is the partial derivative of $u$ in radial
%direction.
\end{thm}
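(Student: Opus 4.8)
The plan is to prove the three assertions in order, building on Lemmas~\ref{1lemsymreg} and~\ref{1lemradsymonly}, with the central device being a comparison of the given minimizer against the whole family of radially symmetric competitors $u_\theta$ defined in \eqref{1urearrange}.

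\textbf{Assertion (i): existence of one radially symmetric minimizer.}
Let $u$ be a global minimizer. By Lemma~\ref{1lemsymreg}, for almost every $\theta\in S^{N-1}$ the function $u_\theta$ lies in $W_0^{1,p}(B_R(0))$ and satisfies the pointwise bound \eqref{1lsymreg01}. The idea is to compare the average of $E(u_\theta)$ over $\theta\in S^{N-1}$ with $E(u)$ itself. First I would compute, using radial coordinates and Fubini as in the proof of Lemma~\ref{1lemsymreg}, that
\begin{align*}
    \frac{1}{\HMS}\int_{S^{N-1}} \left(\int_{B_R(0)} G(u_\theta)\,dx\right)d\theta = \int_{B_R(0)} G(u)\,dx,
\end{align*}
since $u_\theta(x)=u(\abs{x}\theta)$ merely rearranges the values of $u$ along rays, preserving the radial integral of any function of $u$. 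For the gradient term, \eqref{1lsymreg01} together with the assumption that $\tilde{W}$ is increasing on $[0,\infty)$ gives $\tilde{W}(\abs{\nabla u_\theta(x)})\leq \tilde{W}(\abs{\nabla u(\abs{x}\theta)})$ pointwise, and averaging over $\theta$ yields
\begin{align*}
    \frac{1}{\HMS}\int_{S^{N-1}} \left(\int_{B_R(0)} W(\nabla u_\theta)\,dx\right)d\theta \leq \int_{B_R(0)} W(\nabla u)\,dx.
\end{align*}
Hence the $\theta$-average of $E(u_\theta)$ does not exceed $E(u)=\min E$; since each $u_\theta$ is admissible, for almost every $\theta$ we get $E(u_\theta)=\min E$, so $u_\theta$ is itself a (radially symmetric) minimizer. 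If \eqref{G1} holds, then $E(\abs{u_\theta})\leq E(u_\theta)$, so we may replace $u_\theta$ by $\abs{u_\theta}\geq 0$. The monotonicity of $\tilde W$ forces the radial minimizer to use gradients of modulus at least $M_0$ wherever $\tilde W$ is not yet strictly increasing, giving $\partial_r u\leq -M_0$; I would verify this by noting that decreasing $\abs{\partial_r u}$ below $M_0$ strictly lowers neither term and that the sign follows from $u\geq 0$ with zero boundary data.

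\textbf{Assertion (ii): any minimizer satisfying the strict-monotonicity condition \eqref{1trmc0} is symmetric.}
Here I would run the same averaging argument but track the inequality more carefully. For a minimizer $u$ satisfying \eqref{1trmc0}, equality $E(u_\theta)=E(u)$ for a.e.\ $\theta$ forces equality in the pointwise estimate $\tilde{W}(\abs{\nabla u_\theta(x)})\leq \tilde{W}(\abs{\nabla u(\abs{x}\theta)})$ for a.e.\ $x$. Condition \eqref{1trmc0} says $\tilde W$ is strictly increasing at the relevant gradient values, so this equality of $\tilde W$-values forces equality of the moduli themselves, i.e.\ $\abs{\theta\cdot \nabla u(\abs{x}\theta)}=\abs{\nabla u(\abs{x}\theta)}$. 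This means $\nabla u$ is parallel to the radial direction almost everywhere, which is exactly hypothesis \eqref{1lrso0} of Lemma~\ref{1lemradsymonly}. Applying that lemma concludes that $u$ is radially symmetric.

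\textbf{Assertion (iii): under \eqref{G1p} every minimizer satisfies \eqref{1trmc0}.}
This is the step I expect to be the main obstacle, because it requires deducing the pointwise gradient condition \eqref{1trmc0} from the \emph{strict} monotonicity of $G$ rather than from properties of $W$. The strategy is a contradiction argument: suppose on a set of positive measure one could strictly decrease $\abs{\partial_r u}$ without increasing $\tilde W$; then by redistributing the ``saved'' radial displacement I would construct a competitor whose profile reaches larger values of $\abs{u}$ on an inner region, strictly lowering $\int G(u)$ by strict monotonicity of $G$ (using that $G$ is strictly decreasing on $[0,\infty)$ and $G(\mu)\leq G(-\mu)$), while not increasing $\int W(\nabla u)$ — contradicting minimality. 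Concretely, I would work with the one-dimensional reformulation \eqref{E1d}: along each ray the profile $u$ has a radial derivative, and a failure of \eqref{1trmc0} means $\tilde W$ is flat (affine or constant) at the value $\abs{\partial_r u}$, so the gradient can be modified on a flat piece of $\tilde W^{**}$ to push $\abs{u}$ upward at no $W$-cost; strict monotonicity of $G$ then yields strict energy decrease. The delicate points are ensuring the modified profile remains admissible (keeping the boundary value at $r=R$ and the matching at $r=0$) and that the construction can be carried out measurably in $x$; I would handle the first by compensating the displacement so the total radial increment is preserved, and the second by a selection/measurability argument analogous to the Fubini slicing already used. Once \eqref{1trmc0} is established, assertion (ii) gives radial symmetry. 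The sign dichotomy and the statement $\partial_r u\leq -M_0$ in the nonnegative case then follow as in (i), with the case $G(u)=G(-u)$ being the only way a nonpositive minimizer can arise, since otherwise $E(\abs{u})<E(u)$ would contradict minimality.
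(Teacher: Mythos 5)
Your parts (i) (first half) and (ii) coincide with the paper's argument: average $E(u_\theta)$ over $\theta\in S^{N-1}$ using Lemma~\ref{1lemsymreg}, conclude $E(u_\theta)=E(u)$ for a.e.\ $\theta$, and in (ii) upgrade the resulting pointwise equality $\tilde{W}(\abs{\nabla u_\theta})=\tilde{W}(\abs{\nabla u})$ to $\abs{\partial_r u}=\abs{\nabla u}$ via \eqref{1trmc0} and invoke Lemma~\ref{1lemradsymonly}. That much is fine.

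The gap is in part (iii) (and in the tail of (i), which depends on the same device). You correctly identify that deducing \eqref{1trmc0} from \eqref{G1p} is the crux, but what you offer is a sketch with the perturbation running in the wrong direction: failure of \eqref{1trmc0} means $\tilde{W}$ is constant on an interval $[\abs{\partial_r u},\abs{\nu}]$ \emph{above} the current slope, so the exploitable move is to \emph{increase} $\abs{\partial_r u}$ at zero $W$-cost, not to ``strictly decrease $\abs{\partial_r u}$'' and redistribute a saved increment; and you should \emph{not} preserve the total radial increment (there is no boundary condition at $r=0$, and the whole point is that $\abs{u}$ gets strictly larger there). The concrete device the paper uses, and which your proposal is missing, is the explicit rearrangement
\begin{align*}
    v_\theta'(r):=-\max\left\{\nu\geq 0~\left|~\tilde{W}(\nu)=\tilde{W}(\abs{u_\theta'(r)})\right.\right\},
    \qquad v_\theta(r):=-\int_r^R v_\theta'(s)\,ds.
\end{align*}
This settles in one stroke all the ``delicate points'' you defer: $\tilde{W}(v_\theta')=\tilde{W}(u_\theta')$ exactly (so no $W$-cost and, via \eqref{W1}, $v_\theta\in W_0^{1,p}$), $v_\theta\geq\abs{u_\theta}$ (so $G(v_\theta)\leq G(u_\theta)$ by \eqref{G1}), and measurability is automatic. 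Minimality of $u_\theta$ forces $G(v_\theta)=G(u_\theta)$, and \emph{only here} does strict monotonicity \eqref{G1p} enter, giving $v_\theta=\abs{u_\theta}$, hence $\abs{v_\theta'}=\abs{u_\theta'}$, the sign dichotomy $u_\theta\equiv\pm v_\theta$ (since $v_\theta$ is monotone), and \eqref{1trmc0} directly from the maximality in the definition of $v_\theta'$. The same construction, with \eqref{G1} in place of \eqref{G1p}, is what yields $\partial_r u\leq -M_0$ in part (i): since $\tilde{W}$ is constant on $[-M_0,M_0]$, the maximum in the definition of $v_\theta'$ is always $\geq M_0$. Your remark that ``decreasing $\abs{\partial_r u}$ below $M_0$ strictly lowers neither term'' does not substitute for this: it shows a slope below $M_0$ is not advantageous, but not that a minimizer with $\partial_r u\leq -M_0$ actually exists, which requires exhibiting the modified competitor and checking its energy did not increase.
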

\begin{rem} \label{1remWstrictconvex}
If $\tilde{W}$ is strictly increasing on $[0,\infty)$ (in particular, this is the case if $\tilde{W}$ is strictly convex), \eqref{1trmc0} is automatically satisfied. Hence in that case every minimizer is radially symmetric, even if \eqref{G1p} does not hold.
\end{rem}
\begin{rem}\label{1remWceqWae}
If the monotonicity of $G$ is not strict and $M>0$ (i.e., $0$ is not the unique minimizer of $\tilde{W}$), 
then asymmetric minimizers might exist.
Consider for example the functional
\begin{align*}
	\int_{B_1(0)} \tilde{W}^{**}(\abs{\nabla u})\,dx,
\end{align*}
where $\tilde{W}^{**}(t):=(t^2-1)^2$ for $\abs{t}\geq 1$ and $\tilde{W}^{**}(t):=0$ for $\abs{t}<1$
(which is the convex envelope of $\tilde{W}(t):=(t^2-1)^2$). Obviously, any function $u$ satisfying $\abs{\nabla u}\leq 1$ a.e.~is a minimizer, and it is not difficult to construct one with that property which is not radially symmetric.
One can even construct infinitely many asymmetric functions in $W_0^{1,4}(B_1(0))$ with $\abs{\nabla u}=1$ a.e.,
which also minimize $\int_{B_1(0)} (\abs{\nabla u}^2-1)^2\,dx$.
\end{rem}
\begin{rem}
As we shall see in Theorem~\ref{1thmncradmin} below,
the monotonicity assumption on $W$ can dropped if replaced by
\eqref{W2} and \eqref{G1p} (combined). 
\end{rem}
%%%%%%%%%%%%%%%%%%%%%%%%%%%%%%%%%%%%%%%%%%%%%%%%%%%%%%%%%%%%%%%%%%%%%%%%%%%%%%%%%%%%%%%%%%%
%%%%%%%%%%%%%%%%%%%%%%%%%%%%%%%%%%%%%%%%%%%%%%%%%%%%%%%%%%%%%%%%%%%%%%%%%%%%%%%%%%%%%%%%%%%
%%%%%%%%%%%%%%%%%%%%%%%%%%%%%%%%%%%%%%%%%%%%%%%%%%%%%%%%%%%%%%%%%%%%%%%%%%%%%%%%%%%%%%%%%%%
\begin{proof}[Proof of Theorem~\ref{1thmradmincon}] {\bf(i) Radial symmetry of one minimizer:}\\
In order to show radial symmetry of a minimizer $u$, we first
consider the family $u_\theta\in W_0^{1,p}({B_R(0)})$, $\theta \in S^{N-1}$, of radially symmetric functions defined in Lemma~\ref{1lemsymreg}; in particular, $u_\theta\in W_0^{1,p}({B_R(0)})$ for a.e.~$\theta$.
It satisfies
\begin{align} \label{1trmc1}
    \frac{1}{\HMS}\int_{S^{N-1}} E(u_\theta)\, d\theta \leq  E(u).
\end{align}
This can be observed in the following way: The function $W$ is
radially symmetric by \eqref{Wsym} and increasing in radial
direction, whence by \eqref{1lsymreg01}
\begin{align} \label{1trmc2}
    W(\nabla u_\theta(r\theta))\leq W(\nabla u(r\theta))
\end{align}
for almost every $r\in (0,R)$ and $\theta\in S^{N-1}$. Consequently,
\begin{align*}
    &\int_{S^{N-1}} E(u_\theta) \,d\theta \\
    &\qquad = \int_{S^{N-1}}\int_{S^{N-1}}\int_0^R \left[
    W(\nabla u_\theta(r\psi))+G(u_\theta(r\psi)) \right]\,r^{N-1}dr\,d\psi\,
    d\theta \\
    &\qquad = \int_{S^{N-1}}\int_{S^{N-1}}\int_0^R
    \left[W(\nabla u_\theta(r\theta))+G(u_\theta(r\theta)) \right]\,r^{N-1}dr\,d\psi\,
    d\theta \\
    &\qquad \hspace{8ex}\text{since $u_\theta$ is radially symmetric and $W$ satisfies \eqref{Wsym}} \\
    &\qquad \leq \int_{S^{N-1}}\int_{S^{N-1}}\int_0^R
    \left[W(\nabla u(r\theta))+G(u(r\theta)) \right]\,r^{N-1}dr\,d\psi\,
    d\theta \\
    &\qquad \qquad \qquad \text{due to \eqref{1trmc2}}\\
    &\qquad = \HMS E(u).
\end{align*}
Since $u$ is a minimizer, we know that $E(u)\leq E(u_\theta)$ for
a.~e.~$\theta\in S^{N-1}$. The only way this can coincide with \eqref{1trmc1} is
if
\begin{align}\label{1trmc3}
    E(u)=E(u_\theta),\quad\text{for a.~e.~$\theta\in S^{N-1}$},
\end{align}
i.e., the radially symmetric function $u_\theta$ is a minimizer,
too, for almost every $\theta$. If \eqref{G1} holds, the remaining properties asserted can be achieved by further rearranging $u_\theta$ to another minimizer $v_\theta$ as in step (iii) below. 

{\bf(ii) Radial symmetry of all minimizers satisfying \eqref{1trmc0}:}\\
First observe that as a consequence of the calculation in (i),
\eqref{1trmc3} is possible only if equality holds in \eqref{1trmc2},
for a.~e.~$r$ and $\theta$. By virtue of \eqref{1trmc0} and \eqref{Wsym}, this implies that
\begin{align*}
    \abs{\nabla u_\theta(r\theta)}=\abs{\partial_r u(r\theta)}
    =\abs{\nabla u(r\theta)},\quad\text{for
    a.~e.~$r$, $\theta$}.
\end{align*}
Hence the vector field $\nabla u(x)$ is colinear to $x$ almost
everywhere in $B_R(0)$. Since the only gradient fields on $B_R(0)$
with such a property are gradients of radially symmetric potentials,
as seen in Lemma~\ref{1lemradsymonly}, this proves radial
symmetry of $u$.

{\bf(iii) Common properties of all minimizers, assuming \eqref{G1}:}\\
We define a rearrangement $v_\theta$ of the radially symmetric
minimizers $u_\theta$ by setting
\begin{align*}
    v_\theta'(r):=-\max\left\{\nu\geq 0~\left|~
    \tilde{W}(\nu)=\tilde{W}(\abs{u_\theta'(r)})\right.\right\}~\text{and}
    ~v_\theta(r):=-\int_r^R v_\theta'(s)ds.
\end{align*}
Since $\tilde{W}$ is an even function by \eqref{Wsym},
\begin{align} \label{1propmon1}
    \tilde{W}(v_\theta'(r))=\tilde{W}(u_\theta'(r))~\text{for every}~r\in(0,R).
\end{align}
On the other hand, by \eqref{G1},
\begin{align} \label{1propmon2}
    G(v_\theta(r))\leq G(u_\theta(r))~\text{for every}~r\in(0,R),
\end{align}
because obviously $v_\theta\geq \abs{u_\theta}$. Now
\eqref{1propmon1} and \eqref{1propmon2} imply that
\begin{equation}
\begin{aligned}
       E(v_\theta)&=\HMS\int_0^R \left[\tilde{W}(v_\theta')+G(v_\theta)\right]r^{N-1}dr\\
    &\leq \HMS\int_0^R \left[\tilde{W}(u_\theta')+G(u_\theta)\right]r^{N-1}dr=E(u_\theta).
\end{aligned} \label{1propmon3}
\end{equation}
Recalling that $u_\theta$ is a global minimizer for $E$, we
conclude that equality holds in \eqref{1propmon3} and thus also in
\eqref{1propmon2}, for every $r$, i.e.,
\begin{align} \label{1propmon4}
    G(v_\theta)=G(u_\theta)~\text{on}~(0,R),
\end{align}
Since $v_\theta\geq \abs{u_\theta}$, \eqref{G1p} and \eqref{1propmon4} entail that $v_\theta=\abs{u_\theta}$, and, consequently, $\abs{v_\theta'}=\abs{u_\theta'}$ almost everywhere. 
Since $v_\theta$ is decreasing, this implies that 
$u_\theta'$ cannot change sign on $(0,R)$, and thus
\begin{align}\label{1propmon5}
	\text{either $u_\theta\equiv v_\theta$ or $u_\theta\equiv -v_\theta$.}
\end{align}
Furthermore, by the definition of $v_\theta'$ and the monotonicity of
$\tilde{W}$, we have that
\begin{align*}
    \tilde{W}(v_\theta'(r))<\tilde{W}(\nu)~\text{whenever
    $\abs{\nu}>\abs{v_\theta'(r)}$},
\end{align*}
for a.e.~$r\in (0,R)$ and $\theta\in S^{N-1}$. Thus \eqref{1trmc0} holds (recall that $\abs{v_\theta'(r)}=
\abs{\partial_r v_\theta(r\theta)}=\abs{\partial_r u_\theta(r\theta)}=\abs{\partial_r u(r\theta)}$), and (ii) yields the radial symmetry of $u$. The remaining properties of $u$ claimed in
the theorem now follow directly from \eqref{1propmon5}, \eqref{1propmon4} and the definition 
of the $v_\theta$.
\end{proof}
%%%%%%%%%%%%%%%%%%%%%%%%%%%%%%%%%%%%%%%%%%%%%%%%%%%%%%%%%%%%%%%%%%%%%%%%%%%%%%%%%%%%%%%%%%%
%%%%%%%%%%%%%%%%%%%%%%%%%%%%%%%%%%%%%%%%%%%%%%%%%%%%%%%%%%%%%%%%%%%%%%%%%%%%%%%%%%%%%%%%%%%
%%%%%%%%%%%%%%%%%%%%%%%%%%%%%%%%%%%%%%%%%%%%%%%%%%%%%%%%%%%%%%%%%%%%%%%%%%%%%%%%%%%%%%%%%%%
Concluding this section, we derive a condition for the radial
derivative of a bounded radially symmetric minimizer at the origin,
which can be interpreted as a replacement for the second
Weierstrass--Erdmann corner condition 
at this point. Although it does not contribute to the proof of existence of a minimizer,
it is a qualitative property of radially symmetric 
minimizers which is interesting in its own right. 
Below, we assume that $u$ belongs to $L^\infty(B_R(0))$. Even if $p<N$, this is not a restriction,
since in fact every radially symmetric local minimum $u$ is essentially bounded: 
First note that $u\in C^0[\delta,R]$ for every $\delta>0$ as a consequence of the one--dimensional 
Sobolev imbedding. Thus it is enough to show that $u\in L^\infty_{\loc}(B_R(0))$.
For a proof of the latter see for example \cite{Gia83B} (Theorem 2.1 in Chapter VII).
%The latter holds provided that $G$ satisfies a subcritical growth condition (\eqref{G2} is sufficient), 
%see for example \cite{Gia83B} (Theorem 2.1 in Chapter VII).
%In the radial symmetric case, every critical point belongs to $W^{1,\infty}$ as we show in
%Proposition~\ref{prop0dr} below.
%%%%%%%%%%%%%%%%%%%%%%%%%%%%%%%%%%%%%%%%%%%%%%%%%%%%%%%%%%%%%%%%%%%%%%%%%%%%%%
%%%%%%%%%%%%%%%%%%%%%%%%%%%%%%%%%%%%%%%%%%%%%%%%%%%%%%%%%%%%%%%%%%%%%%%%%%%%%%
\begin{prop} \label{1prop0M}
Assume that \eqref{W0}, \eqref{W1}, \eqref{Wsym} and \eqref{G0}--\eqref{G1}
are satisfied, and that $E$ has a radially symmetric minimizer $u\in W_0^{1,p}({B_R(0)})$ such that
$u\in L^\infty({B_R(0)})$ and $\partial_r u\leq -M$ a.e., where $M$ is given by \eqref{WMpoints}.
Furthermore assume that $\tilde{W}$ is increasing on $[0,\infty)$
and that $G$ satisfies
\begin{align} \label{1prop0M00}
    G(\nu)-G(\mu)\leq
    L \abs{\nu-\mu}~\text{for every $\mu,\nu \in [0,\norm{u}_{L^{\infty}}]$ with $\mu\geq \nu$},
\end{align}
where $L$ is a constant which only depends on $\norm{u}_{L^{\infty}}$.
(In particular, \eqref{1prop0M00} holds if $G$ is locally \mbox{Lipschitz}~continuous.)
Then 
\begin{align} \label{1prop0M01}
    \lim_{r\to 0} \partial_r u(r)= -M,
\end{align}
for a suitable representative of
the $L^p$-function $\partial_r u$. 
\end{prop}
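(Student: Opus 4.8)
The plan is to prove \eqref{1prop0M01} by exploiting the radial Euler--Lagrange equation (more precisely, a Du~Bois--Reymond / first integral form of it), together with the structure of the assumption $\partial_r u \le -M$ a.e.\ and the monotonicity of $\tilde W$ on $[0,\infty)$. First I would rewrite the energy restricted to radially symmetric competitors as the one--dimensional integral $\tilde E(u)=\HMS\int_0^R [\tilde W(u') + G(u)]\,r^{N-1}\,dr$, as in \eqref{E1d}, and perform inner variations of the form $u_t(r):=u(r)+t\varphi(r)$ with $\varphi\in C_0^\infty(0,R)$ (test functions supported away from both endpoints, so that no boundary terms are needed and the free inner boundary at $r=0$ causes no trouble). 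Writing out $\frac{d}{dt}\tilde E(u_t)|_{t=0}=0$ gives the weak form of the radial Euler--Lagrange equation. Because $\tilde W$ need only be $C^0$ and is possibly affine on parts of its graph, one should work with the integrated (Du~Bois--Reymond) version rather than a pointwise equation: there is a constant $c$ and the relation
\begin{align*}
    r^{N-1}\,\tilde W'(u'(r)) = \int_0^r s^{N-1} G'(u(s))\,ds + c,
\end{align*}
holding for a.e.\ $r$ after choosing the good representative, where $\tilde W'(u')$ is interpreted via an appropriate subdifferential selection if $\tilde W$ is merely convexified. The key point is to evaluate the singular weight $r^{N-1}$ in the limit $r\to 0$.

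The central idea is that the right--hand side above is $o(1)$ as $r\to 0$: since $u\in L^\infty$ and $G$ satisfies the local Lipschitz--type bound \eqref{1prop0M00}, $G'(u(s))$ (or the corresponding difference quotient estimate) is bounded on the relevant range, so $\int_0^r s^{N-1} G'(u(s))\,ds = O(r^N)$, and one must also identify the constant $c$. I would argue that $c=0$: the only way the left side $r^{N-1}\tilde W'(u'(r))$ can stay bounded (indeed tend to a finite limit) as $r\to 0$, given that $r^{N-1}\to 0$, is for $\tilde W'(u'(r))$ not to blow up, and a nonzero $c$ would force $\tilde W'(u'(r)) \sim c\, r^{-(N-1)}\to \pm\infty$; coercivity/growth \eqref{W1}--\eqref{W2} forces $\tilde W'$ to grow at most polynomially in its argument, which combined with $u'\in L^p$ near $0$ rules out the blow--up unless $c=0$. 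Hence $\tilde W'(u'(r))\to 0$, i.e.\ $u'(r)$ approaches the set where $\tilde W'$ vanishes.

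With $c=0$ established, the final step is to translate $\tilde W'(u'(r))\to 0$ into $u'(r)\to -M$. Here the hypothesis $\partial_r u\le -M$ a.e.\ is decisive: it confines $u'(r)$ to $(-\infty,-M]$, where, by evenness and monotonicity of $\tilde W$ on $[0,\infty)$, one has $\tilde W'(u'(r)) = -\tilde W'(|u'(r)|)\le 0$ with $\tilde W'(|u'(r)|)=0$ attainable only at $|u'(r)|=M$ (recall $M$ is the largest minimizer of $\tilde W$, so $\tilde W$ is flat on $[0,M]$ and strictly increasing just beyond $M$ by the increasing assumption). Thus $\tilde W'(u'(r))\to 0$ together with $u'(r)\le -M$ forces $|u'(r)|\to M$, giving $u'(r)\to -M$ and hence \eqref{1prop0M01} for the chosen representative. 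I expect the main obstacle to be the low regularity of $\tilde W$: since $\tilde W$ is only $C^0$ (and $\tilde W^{**}$ only $C^1$ under the stated hypotheses), $\tilde W'$ need not exist classically, so one has to phrase the first integral using a measurable selection $\sigma(r)\in \partial \tilde W(u'(r))$ of the convex subdifferential and argue that the relevant monotonicity/vanishing statements survive for this selection; care is also needed near $r=0$ where the $L^\infty$ bound on $u$ must be combined with \eqref{1prop0M00} to control $G$ uniformly and to justify passing to the limit in the integral term.
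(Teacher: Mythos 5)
Your route through an Euler--Lagrange / Du Bois--Reymond first integral does not work under the hypotheses of this proposition, for three separate reasons. First, the object $G'$ in your first integral need not exist: the proposition assumes only \eqref{G0} together with the one--sided Lipschitz bound \eqref{1prop0M00}, not differentiability of $G$. Second, $\tilde W$ is merely continuous and nonconvex, so the convex subdifferential $\partial\tilde W(u'(r))$ you propose to select from is empty wherever $\tilde W>\tilde W^{**}$, and the polynomial growth of $\tilde W'$ that you invoke to force $c=0$ rests on \eqref{W2}, which is \emph{not} among the hypotheses here (only \eqref{W0}, \eqref{W1}, \eqref{Wsym} are assumed on $W$). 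Third, and most seriously, even granting a generalized first integral with $c=0$, the final implication ``$\tilde W'(u'(r))\to 0$ and $u'\leq -M$ imply $u'(r)\to -M$'' is false under the stated assumptions: ``$\tilde W$ increasing on $[0,\infty)$'' permits $\tilde W$ to be constant on an interval $[a,b]$ with $M<a<b$ (at a level strictly above $\min\tilde W$, so this does not contradict the definition \eqref{WMpoints} of $M$), and there $\tilde W'=0$ while $|u'|$ is far from $M$. The quantitative separation you need is a statement about \emph{values}, not derivatives, namely that for each $\eps>0$ there is $c_\eps>0$ with $\tilde W(\xi)-\tilde W(M)\geq c_\eps\,|\xi+M|$ for all $\xi\leq -M-\eps$; this follows from continuity, coercivity \eqref{W1} and the fact that $M$ is the largest minimizer, and it is exactly what the paper uses.

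The paper's proof avoids the Euler--Lagrange equation altogether and is a direct competitor construction, which you could adopt: fix $\eps>0$, let $I_\eps^\delta:=\{r\in(0,\delta)\mid \partial_r u(r)\leq -M-\eps\}$, and define $u_\delta$ by replacing $\partial_r u$ with $-M$ on $I_\eps^\delta$ and integrating from $R$ inward. The gain in the gradient term is at least $c_\eps\int_{I_\eps^\delta}|u_\delta'-u'|\,r^{N-1}dr$ by the value inequality above, while \eqref{1prop0M00} and an interchange of the order of integration bound the loss in the $G$--term by $\delta L\int_{I_\eps^\delta}|u_\delta'-u'|\,s^{N-1}ds$. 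For $\delta<c_\eps/L$, minimality of $u$ forces $\LM_1(I_\eps^\delta)=0$, which gives \eqref{1prop0M01} since $\partial_r u\leq -M$ a.e.\ is assumed. This argument needs no regularity of $G$ or $W$ beyond what is stated, which is precisely why the proposition is formulated with such weak hypotheses.
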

\begin{proof}
Fix $\eps>0$. For each $\delta\in (0,R)$ consider the set
\begin{align*}
    I_\eps^\delta:=\left\{r\in (0,\delta)\mid\partial_r u(r)\leq -M-\eps
    \right\}.
\end{align*}
We show that for each $\eps>0$, there is a corresponding $\delta>0$
such that $I_\eps^\delta$ is of zero measure, which entails
\eqref{1prop0M01} (we assumed that $u'\leq -M$ on $(0,R)$). 
For this purpose we define a
radially symmetric function $u_\delta:B_R(0)\to \RR$ such that in
radial coordinates
\begin{align*}
    \partial_r u_\delta(r):=\left\{
    \begin{array}{ll}
        -M \quad & \text{if $r\in I_\eps^\delta$} \\
        \partial_r u(r) & \text{if $r\in (0,R)\setminus I_\eps^\delta$,}
    \end{array}\right.
    \quad\text{and}~u_\delta(r):=-\int_r^R \partial_r
    u_\delta(s)\,ds.
\end{align*}
Observe that $0\leq u_\delta\leq u$ and $u_\delta\in
W_0^{1,p}(B_R(0))$ for each $\delta$. For fixed $\eps$, there exists a
constant $c_\eps>0$ such that
\begin{align} \label{1prop0M2}
    \tilde{W}(M)-\tilde{W}(\xi)\leq -c_\eps
    \abs{-M-\xi},~\text{whenever $\xi\leq -M-\eps$}
\end{align}
since $\tilde{W}$ is coercive by \eqref{W1} and $W(\xi)>W(-M)$
whenever $\xi<-M$. The 
energy difference now can be estimated as follows:
\begin{align*}
    0&\leq (E(u_\delta)-E(u))\HMS^{-1}\\
    &= \int_{I_\eps^\delta} \left[\tilde{W}(u'_\delta)-\tilde{W}(u')\right]r^{N-1}\,dr
    + \int_0^\delta \left[G(u_\delta)-G(u)\right]r^{N-1}\,dr\\
    &\leq -c_\eps \int_{I_\eps^\delta}
    \abs{u'_\delta-u'}r^{N-1}\,dr
    +\int_0^\delta L\abs{u_\delta-u}r^{N-1}\,dr\\
    &\qquad\qquad\text{due to \eqref{1prop0M2} and \eqref{1prop0M00}}\\
    &\leq -c_\eps \int_{I_\eps^\delta}
    \abs{u'_\delta-u'}r^{N-1}\,dr
    +\int_0^\delta L
    \left(\int_r^\delta \abs{u_{\delta}'(s)-u'(s)}s^{N-1}\,ds\right)\,dr\\
    &\leq \left(-c_\eps +\delta L\right)\int_{I_\eps^\delta}
    \abs{u'_\delta-u'}r^{N-1}\,dr.
\end{align*}
Since the first factor converges to $-c_\eps<0$ as $\delta\to 0$,
the whole expression eventually becomes negative unless
\begin{align*}
    0= \int_{I_\eps^\delta} \abs{u'_\delta-u'}r^{N-1}\,dr
    \geq \int_{I_\eps^\delta} \eps r^{N-1}\,dr
\end{align*}
for small $\delta$, which entails that $I_\eps^\delta$ is of measure
zero.
\end{proof}
%%%%%%%%%%%%%%%%%%%%%%%%%%%%%%%%%%%%%%%%%%%%%%%%%%%%%%%%%%%%%%%%%%%%%%%%%%%%%%%
%%%%%%%%%%%%%%%%%%%%%%%%%%%%%%%%%%%%%%%%%%%%%%%%%%%%%%%%%%%%%%%%%%%%%%%%%%%%%%%
%%%%%%%%%%%%%%%%%%%%%%%%%%%%%%%%%%%%%%%%%%%%%%%%%%%%%%%%%%%%%%%%%%%%%%%%%%%%%%%
\section[Existence and symmetry of minimizers for nonconvex Lagrangians]
{Existence and properties of minimizers for nonconvex Lagrangians\label{1secWNCSYM}}
We first recall some consequences of the relaxation theorem.
\begin{prop}\label{1propcrelax}
Assume that \eqref{W0}--\eqref{Wsym},
\eqref{G0} and \eqref{G2} are satisfied.
%If $E$ attains its infimum, 
Then every minimizer $u$ of $E$ 
(not necessarily radially symmetric) also minimizes
the relaxed functional $E^{**}$ defined in \eqref{E0c}, and it satisfies
$\tilde{W}(\abs{\nabla u})=\tilde{W}^{**}(\abs{\nabla u})$ a.~e., where
$\tilde{W}^{**}$ is the convex envelope of $\tilde{W}$ defined in
\eqref{WWc}. 
\end{prop}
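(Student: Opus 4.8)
The plan is to derive the proposition from the relaxation theorem, combined with the elementary pointwise inequality $W^{**}\le W$. Since $W^{**}\le W$ on all of $\RR^N$, while the lower order term $G(u)$ is the same in both functionals, we immediately get $E^{**}(v)\le E(v)$ for every $v\in W_0^{1,p}(B_R(0))$; in particular $E^{**}(u)\le E(u)$ for the given minimizer $u$. The real work is to establish the matching identity at the level of infima,
\begin{align*}
    \inf_{W_0^{1,p}} E = \min_{W_0^{1,p}} E^{**}.
\end{align*}
I would obtain this from the relaxation theorem (e.g.~\cite{Da89B}, Chapter 5): since $u$ is \emph{scalar} valued, the quasiconvex envelope of $\xi\mapsto W(\xi)$ coincides with its convex envelope $W^{**}$, so the weakly sequentially lower semicontinuous envelope of the gradient functional $v\mapsto\int_{B_R(0)} W(\nabla v)\,dx$ on $W_0^{1,p}$ is exactly $v\mapsto\int_{B_R(0)} W^{**}(\nabla v)\,dx$.

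The point that must be checked carefully is that adjoining the term $\int_{B_R(0)} G(v)\,dx$ does not disturb this relaxation, because it is \emph{weakly continuous} on $W_0^{1,p}(B_R(0))$. Indeed, by \eqref{G2} the function $G$ has strictly \emph{subcritical} growth (the exponents $p-\varrho$ and $p^*-\varrho$, resp.~$\tilde p$, lie strictly below $p^*$ since $\varrho>0$), so that the associated Nemytskii operator maps bounded subsets of the appropriate $L^q$ continuously into $L^1$, while the compact Sobolev imbedding $W_0^{1,p}(B_R(0))\hookrightarrow\hookrightarrow L^q$ (for the relevant $q<p^*$) converts weak convergence in $W_0^{1,p}$ into strong convergence in $L^q$. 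Hence the relaxation of $E$ is precisely $E^{**}$. Moreover $W^{**}$ inherits \eqref{W1}, \eqref{W2} and \eqref{Wsym} (the coercive lower bound in \eqref{W1} is convex, hence $\le W^{**}$, and rotation invariance is preserved by convexification), so that $E^{**}$ is coercive by Proposition~\ref{1propcoerc} applied with $W^{**}$ in place of $W$, and weakly lower semicontinuous since its integrand is convex in the gradient plus the weakly continuous term; the direct method then shows that $E^{**}$ attains its minimum, which yields the displayed identity.

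With the identity in hand, the conclusion is short. For any minimizer $u$ of $E$ we obtain the chain
\begin{align*}
    \min_{W_0^{1,p}} E^{**} \le E^{**}(u)\le E(u)=\inf_{W_0^{1,p}} E=\min_{W_0^{1,p}} E^{**},
\end{align*}
forcing equality throughout. Thus $u$ minimizes $E^{**}$, and in addition $E^{**}(u)=E(u)$, which means
\begin{align*}
    \int_{B_R(0)} \big[W(\nabla u)-W^{**}(\nabla u)\big]\,dx = 0.
\end{align*}
Since the integrand is nonnegative, $W(\nabla u)=W^{**}(\nabla u)$ almost everywhere.

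It remains to rewrite this in terms of $\tilde W$ and $\tilde W^{**}$. Because $W$ is rotation invariant by \eqref{Wsym} and precomposition with a linear isometry commutes with convexification, $W^{**}$ is rotation invariant as well, say $W^{**}(\xi)=g(\abs{\xi})$. On one hand $t\mapsto\tilde W^{**}(\abs{t})$ extends to a convex function on $\RR^N$, being the composition of $\abs{\cdot}$ with the nondecreasing convex function $\tilde W^{**}|_{[0,\infty)}$, and it is dominated by $W$, so $\tilde W^{**}(\abs{\xi})\le W^{**}(\xi)$; on the other hand the convex, even minorant $t\mapsto g(\abs{t})$ of $\tilde W$ satisfies $g\le\tilde W^{**}$ on $[0,\infty)$. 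Hence $W^{**}(\xi)=\tilde W^{**}(\abs{\xi})$, and combined with $W(\xi)=\tilde W(\abs{\xi})$ the a.e.~identity becomes $\tilde W(\abs{\nabla u})=\tilde W^{**}(\abs{\nabla u})$ a.e., as claimed. The main obstacle is the relaxation step itself: one must confirm that the nonconvex gradient term relaxes exactly to its convex envelope and that the $u$-dependent term survives the passage to the weak limit, and this is precisely where the subcriticality in \eqref{G2} together with the compact imbedding are essential.
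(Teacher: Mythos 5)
Your proposal is correct and follows essentially the same route as the paper: both invoke the relaxation theorem for the gradient term, use the subcritical growth in \eqref{G2} together with the compact Sobolev imbedding to carry the $G$--term through the relaxation, conclude that the infima of $E$ and $E^{**}$ coincide, and then deduce from $E^{**}\leq E$ and $E^{**}(u)=E(u)$ that $W(\nabla u)=W^{**}(\nabla u)$ a.e. Your additional verification that $W^{**}(\xi)=\tilde W^{**}(\abs{\xi})$ is a correct filling--in of a detail the paper leaves implicit, not a change of method.
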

\begin{proof}
This is well known. We sketch the details for the case $p<N$:
By the relaxation theorem (see for example \cite{Da89B}, Theorem 2.1 in Chapter 5),
for every $v\in W_0^{1,p}(B_R(0))$ there exists a sequence $v^s\in W_0^{1,p}(B_R(0))$ such that
$\int_{B_R(0)}W(\nabla v^s)\,dx\to \int_{B_R(0)}W^{**}(\nabla v)\,dx$,
$\nabla v^s\rightharpoonup \nabla v$ weakly in $L^p$ and (by compact imbedding, up to a subsequence) 
$v^s\to v$ in $L^{p^*-\rho}$. As a consequence, we have that $E(v^s)\to E^{**}(v)$, since
the \Nemytskii~operator associated to $G$, i.e., $G:L^{p^*-\rho}(B_R(0))\to L^1(B_R(0))$, $v\mapsto G(v)$, 
is continuous by \eqref{G0} and \eqref{G2}.
In particular, the infima of $E$ and $E^{**}$ coincide (recall the trivial inequality $E^{**}\leq E$). 
Furthermore, $E^{**}(u)=E(u)$ for any minimizer $u$ of $E$
(or, equivalently, $W^{**}(\nabla u)=W(\nabla u)$ a.e.), and any minimizer of $E$ also is a minimizer of $E^{**}$. 
\end{proof}
One major benefit of Proposition~\ref{1propcrelax} is that 
minimizers of $E$ (if they exists) inherit the qualitative properties of minimizers of $E^{**}$. 
In particular, we exploit this to obtain symmetry of all minimizers 
of the nonconvex functional in our main result below.
\begin{thm}\label{1thmncradmin}
Assume that \eqref{W0}--\eqref{Wsym},
\eqref{G0} and \eqref{G2} are satisfied. In addition, suppose that $G$ is either
convex, strictly concave, or of class $C^2$. Then we have the following:
\vspace*{-2ex}
\begin{enumerate}
\item Assume that \eqref{G1} holds.
Then $E$ has a global minimizer in $W_0^{1,p}({B_R(0)})$. At least one minimizer $u$ is radially
symmetric, nonnegative and satisfies $\partial_r u\leq -M$ almost everywhere, where $M$ is defined in \eqref{WMpoints}.
\item Assume that \eqref{G1p} holds. 
Then for every minimizer $u$, $\abs{u}$ has the properties listed in (i); 
in particular, every minimizer is radially symmetric. Furthermore, $u$ does not change sign on $B_R(0)$, and the case $u\leq 0$ is possible only if $G(u)\equiv G(-u)$.
\item Assume that $M=0$, i.e., $\tilde{W}(t)>\tilde{W}(0)$ for every $t\neq 0$. Then $E$ has a global minimizer in $W_0^{1,p}({B_R(0)})$ and every minimizer is radially symmetric.
\end{enumerate}
\vspace*{-1ex}
\end{thm}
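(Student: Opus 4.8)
The plan is to follow the two--step programme announced in the introduction: first solve the relaxed problem, then show that its minimizer already realizes $\tilde{W}$, and not merely $\tilde{W}^{**}$, along its gradient, so that it minimizes the original $E$ as well. For the first step I would record that $W^{**}$ again satisfies \eqref{W0}, \eqref{W1} and \eqref{Wsym}: it is continuous, even, convex, and coercive (the convex minorant $\nu_1\abs{\cdot}^p-C$ of $W$ forces $W^{**}\geq \nu_1\abs{\cdot}^p-C$), while $\tilde{W}^{**}$ is increasing on $[0,\infty)$ and constant \emph{exactly} on $[-M,M]$ — the flat part cannot extend past $M$, since the endpoints of each affine component of $\{\tilde{W}^{**}<\tilde{W}\}$ are contact points and a contact point beyond $M$ would contradict the definition \eqref{WMpoints}. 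Hence the constant $M_0$ in Theorem~\ref{1thmradmincon} may be taken equal to $M$. Convexity of $W^{**}$ makes $E^{**}$ weakly lower semicontinuous, and the proof of Proposition~\ref{1propcoerc} (which uses only \eqref{W1}) gives coercivity, so $E^{**}$ attains its infimum by the direct method. Applying Theorem~\ref{1thmradmincon}(i) to $E^{**}$ then yields a radially symmetric minimizer $u$ of $E^{**}$, and under \eqref{G1} one with $u\geq 0$ and $\partial_r u\leq -M$ a.e.

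The decisive — and only genuinely difficult — step is to show that this relaxed minimizer solves the original problem. Since $\inf E=\inf E^{**}$ and $E^{**}\leq E$ by Proposition~\ref{1propcrelax}, it suffices to prove
\[
    \tilde{W}(\abs{\nabla u})=\tilde{W}^{**}(\abs{\nabla u})\quad\text{a.e.,}
\]
for then $E(u)=E^{**}(u)=\inf E$ and $u$ minimizes $E$. Passing to the radial variable, I must show that $u'(r)$ avoids, for a.e.\ $r$, the open detachment set $\{\tilde{W}^{**}<\tilde{W}\}$, on whose components $\tilde{W}^{**}$ is affine. For convex $G$ this is exactly the Cellina--Perrotta situation, handled by Jensen's inequality on concentric spheres; the new content is to dispense with convexity of $G$. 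Here I would argue locally: assuming $u'(r)$ lay in the interior of a maximal affine interval $(a,b)$ of $\tilde{W}^{**}$ on a set of positive measure, I would perturb $u$ within that interval and derive a contradiction with minimality of $u$ for $E^{**}$. The monotonicity $\partial_r u\leq -M$ together with the corner condition $\lim_{r\to 0}\partial_r u(r)=-M$ from Proposition~\ref{1prop0M} (available because $u$ is bounded and, in the $C^2$ case, $G$ is locally Lipschitz) pins the profile at the boundary $-M$ of the bottom flat interval, while the Du Bois--Reymond relation $r^{N-1}(\tilde{W}^{**})'(u'(r))=\int_0^r s^{N-1}G'(u(s))\,ds$ constrains the admissible slopes; strict concavity, or the $C^2$ hypothesis, on $G$ should then force the contradiction. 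I expect this to be the main obstacle, precisely because the singular weight $r^{N-1}$ and the missing boundary condition at $r=0$ rule out the one--dimensional attainment theorems, as noted after \eqref{E1d}.

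With the crucial step in hand, part (i) is complete. For part (ii), under the strict monotonicity \eqref{G1p}, Proposition~\ref{1propcrelax} shows that an arbitrary minimizer $v$ of $E$ also minimizes $E^{**}$ and satisfies $\tilde{W}(\abs{\nabla v})=\tilde{W}^{**}(\abs{\nabla v})$; applying Theorem~\ref{1thmradmincon}(iii) to $E^{**}$ then gives that $v$ satisfies \eqref{1trmc0}, is radially symmetric, does not change sign, and that $v\leq 0$ forces $G(v)\equiv G(-v)$, whence $\abs{v}$ has all the properties of part (i). For part (iii) I would first observe that $M=0$ forces $\tilde{W}^{**}$ to be \emph{strictly} increasing on $[0,\infty)$: its only bottom contact point is then $0$, and since the endpoint of any affine component of $\{\tilde{W}^{**}<\tilde{W}\}$ is a contact point, no flat piece can emanate from $0$, so the only possible plateau of $\tilde{W}^{**}$ has collapsed to a point. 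Remark~\ref{1remWstrictconvex} applied to $E^{**}$ then yields that \emph{every} minimizer of $E^{**}$ is radially symmetric, with no monotonicity assumption on $G$; combining this with the existence argument of the second step (the bottom contact value now being $0$) and with Proposition~\ref{1propcrelax} — every minimizer of $E$ minimizes $E^{**}$ — proves existence and symmetry of all minimizers of $E$ in the case $M=0$.
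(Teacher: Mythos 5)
Your overall architecture coincides with the paper's: relax to $E^{**}$, get a symmetric minimizer from Theorem~\ref{1thmradmincon}(i), then show the gradient avoids the detachment set $\{\tilde W^{**}<\tilde W\}$; parts (ii) and (iii) are handled, as in the paper, by Proposition~\ref{1propcrelax} together with Theorem~\ref{1thmradmincon}(iii) and (ii) respectively (your route through Remark~\ref{1remWstrictconvex} for (iii) is the same as invoking \eqref{1trmc0} directly). Those parts of your proposal are essentially complete and correct.

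The decisive step, however, is only announced, not proved. For a non-constant affine component $H$ of the detachment set you must show that $S=\{r\in(0,R)\mid u'(r)\in H\}$ has measure zero when $G$ is not convex, and your sketch stops at ``strict concavity, or the $C^2$ hypothesis, on $G$ should then force the contradiction.'' What is actually needed (Propositions~\ref{1propaffCP} and~\ref{1propaff2} of the paper) is a three-stage argument: first, a \emph{one-sided} first-variation identity $G'(u(r_0))=\alpha\,(N-1)/r_0$ at every point of density $r_0$ of $S$ --- this cannot be read off a naive \EL\ or \DuBoisR\ equation, because $E^{**}$ need not be differentiable and the admissible perturbations must be built so that $u'+h\varphi'$ stays inside $H$ wherever $\varphi'\neq 0$ (the paper constructs $\varphi_{b_0}$ with $\varphi_{b_0}'=\pm\tfrac12\dist{\abs{u'}}{\RR\setminus H}$ on $S$ and $\varphi_{b_0}\geq 0$); second, differencing this identity between nearby density points $r_0$ and $r_0+h_n$ of $S$ and using that $r_0$ is a \Lebesgue\ point of $u'$ with $u'(r_0)\in H$ to conclude $\liminf_{t\to0}[G'(u(r_0)+t)-G'(u(r_0))]/t<0$, hence $G''(u(r_0))<0$ in the $C^2$ case, i.e.\ $G$ is strictly concave near $u(r_0)$; third, the symmetric second-variation comparison $0\leq E^{**}(u+\varphi)+E^{**}(u-\varphi)-2E^{**}(u)=\HMS\int [G(u+\varphi)+G(u-\varphi)-2G(u)]r^{N-1}dr<0$, which is the actual contradiction. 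None of these three steps appears in your proposal, and they constitute the paper's main new content. Also, your appeal to Proposition~\ref{1prop0M} (the corner condition $\partial_r u\to -M$ at the origin) is a red herring: the paper states explicitly that it does not contribute to the existence proof, and it is not used in ruling out the affine intervals.
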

\begin{rem} \label{1remunique}
If $G$ is convex and strictly monotone, then the minimizer of $E^{**}$ (and thus, using the relaxation theorem,
also of $E$) is unique \cite{CePe94a}.
In the case of nonconvex $G$ one has uniqueness of the minimizer provided that, in addition to \eqref{G2} and \eqref{G1}, $G$ is of class $C^1$, $\mu \mapsto \mu^{-1} G'(\mu)$ is decreasing on $(0,\infty)$
and $\tilde{W}(t)=At^4-Bt^2+C$ for some constants $A,B>0$, $C\in \RR$, see \cite{KroeKie06a} or Section 1.6 of \cite{diss05B}. (The actual conditions on $\tilde{W}$ are more general than that, but still very restrictive.) 
However, note that this result assumes that the class of candidates only consists of radially symmetric functions (having some qualitative properties which all symmetric minimizers have in common), so it cannot be used to show symmetry, if it is not known in advance. 
\end{rem}
\begin{cor}
Under the assumptions of Theorem~\ref{1thmncradmin} (i), we have that
\begin{align*}
	\partial_r u(x)\to -M\quad\text{as}~\abs{x}\to 0
\end{align*}
for every radially symmetric minimizer $u$ of $E$ such that
$\partial_r u\leq -M$ a.e..
%, where $M$ is defined in \eqref{WMpoints}.
\end{cor}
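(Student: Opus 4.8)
The plan is to reduce the statement to Proposition~\ref{1prop0M}, which already yields exactly the limit \eqref{1prop0M01} under the two extra hypotheses that $\tilde{W}$ be increasing on $[0,\infty)$ and that $G$ satisfy the Lipschitz-type bound \eqref{1prop0M00}. The obstruction is that for nonconvex $W$ the profile $\tilde{W}$ need not be monotone, so Proposition~\ref{1prop0M} cannot be applied to $E$ directly. First I would pass to the relaxed functional $E^{**}$: by Proposition~\ref{1propcrelax}, the given radially symmetric minimizer $u$ of $E$ is also a minimizer of $E^{**}$, and $u\in L^\infty(B_R(0))$ by the boundedness of radially symmetric minimizers recorded just before Proposition~\ref{1prop0M}. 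The corollary is then obtained by applying Proposition~\ref{1prop0M} with $W$ replaced by $W^{**}$ and $E$ by $E^{**}$.

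The bulk of the work is to verify that $W^{**}$ inherits the structural hypotheses \eqref{W0}, \eqref{W1} and \eqref{Wsym} and is in addition increasing in the radial direction. Continuity of $W^{**}$ holds since the convex envelope of a continuous function is continuous; coercivity \eqref{W1} for $W^{**}$ follows because $\xi\mapsto \nu_1\abs{\xi}^p-C$ is a convex minorant of $W$ and hence also a minorant of $W^{**}$; and symmetry \eqref{Wsym} is preserved because rotations leave $W$ invariant, so that $W^{**}(\xi)=\tilde{W}^{**}(\abs{\xi})$ with $\tilde{W}^{**}$ the convex envelope from \eqref{WWc}. Crucially, being convex and even, $\tilde{W}^{**}$ is (non-strictly) increasing on $[0,\infty)$, which is precisely the monotonicity demanded in Proposition~\ref{1prop0M}.

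Next I would check that the constant $M$ from \eqref{WMpoints} is unchanged under relaxation and that the minimizer still satisfies $\partial_r u\leq -M$. Since the minimum value of $\tilde{W}^{**}$ equals that of $\tilde{W}$ and is attained exactly on the convex hull of $\{\tilde{W}=\min\tilde{W}\}$, which by the definition of $M$ and evenness equals $[-M,M]$, the profile $\tilde{W}^{**}$ is constant on $[-M,M]$ and, because a convex function cannot be constant at a value strictly above its minimum, strictly increasing on $[M,\infty)$. This supplies the strict inequality $\tilde{W}^{**}(t)>\tilde{W}^{**}(M)$ for $t>M$ that is used in the proof of Proposition~\ref{1prop0M}, and the assumption $\partial_r u\leq -M$ is exactly the hypothesis placed on $u$ in the corollary.

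Finally, the Lipschitz-type condition \eqref{1prop0M00} must be secured. Since in Theorem~\ref{1thmncradmin}(i) the function $G$ is convex, strictly concave, or of class $C^2$, it is in every case locally Lipschitz continuous, so \eqref{1prop0M00} holds on the compact interval $[0,\norm{u}_{L^\infty}]$. With all hypotheses of Proposition~\ref{1prop0M} now in force for $E^{**}$, that proposition gives $\lim_{r\to 0}\partial_r u(r)=-M$, i.e.\ $\partial_r u(x)\to -M$ as $\abs{x}\to 0$. The main obstacle is exactly this transfer of the hypotheses to $W^{**}$, and in particular confirming that the relaxed profile is strictly increasing beyond the same $M$; once this is established, the conclusion is a direct invocation of Proposition~\ref{1prop0M}.
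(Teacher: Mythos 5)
Your proposal is correct and follows exactly the paper's route: the corollary is obtained by applying Proposition~\ref{1prop0M} to the relaxed functional $E^{**}$, using that $G$ is locally Lipschitz in each of the three cases (convex, strictly concave, or $C^2$). The paper states this in one line, while you carefully verify the transfer of hypotheses to $\tilde{W}^{**}$ (monotonicity, preservation of $M$, the strict increase beyond $M$); these checks are all accurate and simply make explicit what the paper leaves implicit.
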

\begin{proof}
The assertion is due to Proposition~\ref{1prop0M} applied to $E^{**}$. Here, note that convex or concave $G$
automatically is locally Lipschitz continuous.
\end{proof}
%\newpage
\begin{proof}[Proof of Theorem~\ref{1thmncradmin}]~\\
{\bf (i) Existence and further properties of one minimizer assuming \eqref{G1}}\\
First we consider the relaxed energy
\begin{align*}
    E^{**}(u):=\int_{B_R(0)} W^{**}(\nabla u)+G(u)\,dx.
\end{align*}
Note that $W^{**}$ is convex, continuous and satisfies the same
coercivity condition \eqref{W1} as $W$. The functional $E^{**}$ has a
minimizer: Any minimizing sequence for $E$ in $W_0^{1,p}({B_R(0)})$
is bounded in this space by the coercivity of $E^{**}$ inherited from
$E$. Thus it converges weakly up to a subsequence, and the weak
limit $u\in W_0^{1,p}({B_R(0)})$ is a minimizer due to the weak lower
semicontinuity of $E^{**}$ (cf.~\cite{Da89B}, e.g.). As a consequence of 
Theorem~\ref{1thmradmincon} (i) applied to $E^{**}$, we can assume that $u$ has all the properties asserted in 
Theorem~\ref{1thmncradmin} (i). We now
have to show that
\begin{align}\label{1WeqWcae}
    \text{$W(\nabla u)=W^{**}(\nabla u)$ almost everywhere,}
\end{align}
because then $E(u)=E^{**}(u)$. Since $u$ is a minimizer of $E^{**}$ and $E^{**}\leq E$, this entails that
$u$ is a minimizer of $E$, too.
For the proof of~\eqref{1WeqWcae} we proceed as follows:
The convex envelope $\tilde{W}^{**}$ is affine on every connected component
of the detachment set $\{t\in \RR\mid \tilde{W}(t)>\tilde{W}^{**}(t)\}$. Note
that the components are open since $\tilde{W}^{**}$ is continuous, and
each one is bounded due to \eqref{W1}. Since $\partial_r u\notin (-M,M)$ a.e.,
which is the the constant part of $\tilde{W}^{**}$, we now consider all connected components
$H$ of the detachment set such that $\tilde{W}^{**}$
is affine but not constant on the interval $H$. 
In particular, $H\subset (-\infty,0)$ or $H\subset (0,\infty)$ due to 
the symmetry and coercivity of $\tilde{W}^{**}$.
There are at most countably many of those components, and thus it suffices to show
that $S:=\{r\in (0,R)\mid
\partial_r u(r)\in H\}$ is of measure zero, for each such $H$.
If $G$ is convex, the set $S$ is of measure zero as shown in
\cite{CePe05} (if $G$ is convex and of class $C^1$, 
Proposition~\ref{1propaffCP} can be used instead). If $G$ is strictly concave or
if $G$ is of class $C^2$, we arrive at the same conclusion by virtue
of Proposition~\ref{1propaff2} below.

{\bf (ii) Common properties of all minimizers assuming \eqref{G1p}}\\
In view of Proposition~\ref{1propcrelax}, Theorem~\ref{1thmradmincon} (iii) applied to $E^{**}$
yields the assertion. 

{\bf (iii) Existence of one and symmetry of all minimizers if $\mbf{M=0}$}\\
As in (i), we obtain a radially symmetric minimizer $u$ of $E^{**}$
with the aid of Theorem~\ref{1thmradmincon} (i) applied to $E^{**}$. 
(Note however that $u$ might change sign on $(0,R)$ this time.)
Since $0$ is the unique minimizer of $\tilde{W}$ and $\tilde{W}$ is coercive,
we have that $\tilde{W}^{**}(t)>\tilde{W}^{**}(0)=\tilde{W}(0)$ for every $t\neq 0$.
Hence the convex function $\tilde{W}^{**}$ is strictly increasing on $[0,\infty)$
and strictly decreasing on $(-\infty,0]$. In particular,
$\tilde{W}^{**}$ cannot be constant on a connected component $H$ of $\{\tilde{W}^{**}<\tilde{W}\}$,
and $0\notin H$ for any such component.
Reasoning as in (i), we get that $u$ also is a minimizer of $E$.
By virtue of Proposition~\ref{1propcrelax} and the monotonicity of 
$\tilde{W}^{**}$, radial symmetry of all minimizers of $E$ 
is a consequence of Theorem~\ref{1thmradmincon} (ii) applied to $E^{**}$.
\end{proof}
We now derive two results which in particular rule out the
possibility that the radial derivative of a radially symmetric local
minimizer stays in an interval where $\tilde W$ is affine but not constant,
thereby providing the missing piece in the proof of
Theorem~\ref{1thmncradmin} above. We need a few measure--theoretic
notions:
\begin{defn}[\Lebesgue~points and points of density]
Let $f:(0,R) \to \RR$ be locally integrable and let $S\subset \RR$ be
\Lebesgue-measurable. We call $s\in (0,R)$ a \emph{\Lebesgue~point} of
$f$ if 
\begin{align*}
    \frac{1}{h}\int_0^h \abs{f(s+t)-f(s)}\,dt\to 0~\text{as $h\to 0$ ($h\in \RR$)}.
\end{align*}
Furthermore, we call $s\in \RR$ a (measure--theoretic) \emph{point of density} of $S$ if
\begin{align*}
    \lim_{\delta\to 0} \frac{\LM_1(S\cap (s-\delta,s+\delta))}{2\delta}=1.
\end{align*}
\end{defn}
\begin{rem}\label{1remLebesguepoints} 
Almost all points of $(0,R)$ are \Lebesgue~points of $f$, for an
arbitrary function $f\in L_{\loc}^1((0,R))$.
Almost all points of a measurable set $S\subset \RR$ are points of density of $S$. 
In particular,
if the set of points of density of $S$ in $S$ is of
measure zero, then so is $S$. Furthermore, each point of density of
$S$ is an accumulation point of other points of density.
For a proof of the first two assertions see for example~\cite{EvGa92B}; 
the latter two are immediate consequences.
\end{rem}
The proposition below is a variant of a result of A.~Cellina and
S.~Perrotta \cite{CePe94a,CePe05}. Here, we assume more regularity for $G$ to obtain a 
stronger conclusion.
\begin{prop}\label{1propaffCP}
Assume that $W$ satisfies
\eqref{W0}--\eqref{Wsym} and that $G$ is of class
$C^1$ and satisfies \eqref{G2}. Furthermore suppose that $\tilde{W}^{**}$ is affine but not constant
on a bounded open interval $H\subset \RR\setminus\{0\}$, i.e.
\begin{align} \label{1paffCPWaff}
    \tilde{W}^{**}(t)=\alpha t+\beta~\text{for every $t\in H$},
\end{align}
where $\alpha\neq 0$ and $\beta\in \RR$ are constants. Let $u\in
W_0^{1,p}(B_R(0))$ be a local extremal of $E^{**}$ which is radially
symmetric. %and satisfies $\partial_r u\leq 0$ on $B_R(0)$. 
Moreover
let $r_0\in S$ be a \Lebesgue~point of $u'$ as well as a point
of density of $S$, where
\begin{align*}
    S:=\{r\in (0,R)\mid u'(r)\in H\}.
\end{align*}
Then we have that
\begin{align*}
    \liminf_{t\to 0}\frac{G'(u(r_0)+t)-G'(u(r_0))}{t} 
    \leq -\alpha\frac{N-1}{u'(r_0)}\cdot\frac{1}{r_0^2} <0.
\end{align*}
%Moreover, the assertion above also holds if ``local extremal'' is
%replaced by ``critical point'' as long as $E$ is
%\Gateaux-differentiable at $u$.
\end{prop}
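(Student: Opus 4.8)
The plan is to work with the one-dimensional radial reduction of $E^{**}$, namely $\tilde E(u)=\HMS\int_0^R[\tilde W^{**}(u')+G(u)]\,r^{N-1}\,dr$, and to test the (local) minimality of $u$ against competitors whose radial derivative is confined to the affine interval $H$. Write $c:=u(r_0)$, $s_0:=u'(r_0)\in H$ and $g(t):=G'(c+t)-G'(c)$, and set $\kappa:=\alpha\frac{N-1}{s_0}\frac1{r_0^2}$, so that the assertion reads $\liminf_{t\to0}g(t)/t\le-\kappa<0$. Fixing a small interval $I_\rho=(r_0-\rho,r_0+\rho)$, I would consider $w=u$ outside $I_\rho$ with $w'\in H$ on $I_\rho$; since $\tilde W^{**}$ agrees with the supporting line $t\mapsto\alpha t+\beta$ on $H$ and dominates it everywhere, one has $\tilde W^{**}(w')-\tilde W^{**}(u')\le\alpha(w'-u')$ pointwise, with equality on $S$. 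Using $\tilde E(w)\ge\tilde E(u)$ and integrating the resulting linear term by parts (the boundary terms vanish), this yields the variational inequality
\[ 0\le -\alpha(N-1)\int_{I_\rho}(w-u)\,r^{N-2}\,dr+\int_{I_\rho}\bigl[G(w)-G(u)\bigr]r^{N-1}\,dr. \]
To keep this exact rather than merely an inequality, I would take $\phi:=w-u$ with $\phi'$ supported in $S\cap I_\rho$, so that $w'=u'$ off $S$ and the $\tilde W^{**}$-contribution carries no error.

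Next I would expand $G(u+\phi)-G(u)=G'(u)\phi+\int_0^\phi[G'(u+\tau)-G'(u)]\,d\tau$ and isolate the term linear in $\phi$, whose coefficient is $r^{N-1}\bigl(G'(u)-\tfrac{\alpha(N-1)}r\bigr)$. Testing with bumps of either sign forces the first-order relation $G'(c)=\alpha(N-1)/r_0$ (equivalently, the weak \EL~equation $\tfrac{d}{dr}[r^{N-1}(\tilde W^{**})'(u')]=r^{N-1}G'(u)$ evaluated at the density point $r_0$, since $(\tilde W^{**})'(u')=\alpha$ on $S$). I would then change variables to the value $t=u(r)-c$ via $r=r_0+t/s_0$ — this is where the factor $1/u'(r_0)$ enters — and expand the weight $r^{N-1}$ to first order about $r_0$. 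Writing $\Phi(t):=\phi(r_0+t/s_0)$, the zeroth-order terms collapse to $\int_t^{t+\Phi}g$, while the first-order (drift) terms produced by the weight combine, using $G'(c)=\alpha(N-1)/r_0$, into exactly $\kappa\,t\,\Phi$. Thus the variational inequality reduces, to leading order, to
\[ 0\le \int \Bigl[\kappa\,t\,\Phi(t)+\int_t^{t+\Phi(t)}g(s)\,ds\Bigr]\,dt, \]
valid for all admissible profiles $\Phi$ (vanishing at the endpoints, with $\abs{\Phi'}$ small enough that $s_0+\phi'\in H$).

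Finally I would argue by contradiction. If $\liminf_{t\to0}g(t)/t>-\kappa$, I choose $\eta\in(0,\kappa]$ and $\delta>0$ with $g(t)\ge(-\kappa+\eta)t$ for $0<t<\delta$, and take a one-sided probe $\Phi\le0$ supported in $(\tfrac\delta2,\delta)$ with $\abs{\Phi}<\tfrac\delta2$. For such $t>0$ and $\Phi\le0$ the bound $g(s)\ge(-\kappa+\eta)s$ gives $\int_t^{t+\Phi}g\le(-\kappa+\eta)(t\Phi+\tfrac12\Phi^2)$, whence the integrand above is $\le \eta\,t\,\Phi+(-\kappa+\eta)\tfrac12\Phi^2<0$ wherever $\Phi<0$ (both summands being nonpositive because $\eta\le\kappa$, $t>0$, $\Phi\le0$). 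This makes the right-hand side strictly negative, contradicting the displayed inequality; hence $\liminf_{t\to0}g(t)/t\le-\kappa$, and $\kappa>0$ follows from the sign bookkeeping ($\alpha$ and $u'(r_0)$ share the sign of the half-line containing $H$). The case $u'(r_0)<0$ is handled identically after reflecting the side on which $\Phi$ is supported.

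The main obstacle is the explicit weight $r^{N-1}$ and its coupling to the non-affine set $I_\rho\setminus S$: the genuine second-order signal is of order $\rho^3$, whereas the crude contribution of $I_\rho\setminus S$ to the first-order term is controlled only through the density property $\LM_1(I_\rho\setminus S)=o(\rho)$, which on its own is too weak. Making these error terms truly negligible — by exploiting that $r_0$ is simultaneously a \Lebesgue~point of $u'$ and a point of density of $S$, choosing the support and amplitude of $\Phi$ carefully (and, if necessary, passing to a suitable sequence $\rho_n\downarrow0$) — is the delicate technical heart of the argument and the point at which the hypotheses on $r_0$ become indispensable.
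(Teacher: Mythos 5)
Your first stage --- restricting to admissible radial perturbations $\varphi$ with $\varphi'$ supported in $S$ and small enough that $u'+h\varphi'$ stays in $H$, integrating the affine $\tilde W^{**}$--term by parts, and extracting the pointwise relation $G'(u(r_0))=\alpha\frac{N-1}{r_0}$ at points of density of $S$ --- is exactly the paper's first step (including the intermediate--value construction of a sign--definite admissible bump with $\varphi'$ supported in $S$, which you invoke only implicitly with ``testing with bumps of either sign''). After that the two arguments part ways, and yours has a genuine gap that you yourself flag. Your reduction of the minimality inequality to $0\le\int\bigl[\kappa\,t\,\Phi+\int_t^{t+\Phi}g\bigr]$ is only claimed ``to leading order'', while the whole contradiction lives at order $\rho^3$; hence every discarded error --- replacing $u(r)-u(r_0)$ by $s_0(r-r_0)$ inside $g$ and inside the drift term, expanding the weight $r^{N-1}$, the lower bound on $\abs{\int\Phi}$ needed to make the negative signal genuinely of size $\rho^3$ (which requires $\dist{u'(r)}{\RR\setminus H}$ not to degenerate on most of the support, i.e.\ the \Lebesgue~point property again), and the constraint keeping $t+\Phi$ on the correct side of $0$ (which forces an extra scaling of $\Phi$ when $H$ is much wider than $\abs{s_0}$) --- must be shown to be $o(\rho^3)$. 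You explicitly leave this ``delicate technical heart'' unresolved, so as written the proof is incomplete.

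The gap is avoidable, and the paper avoids it by a much shorter route for which you already have all the ingredients: since $G'(u(r))=\alpha\frac{N-1}{r}$ holds at \emph{every} point of density of $S$ and such points are never isolated, pick density points $r_0+h_n\to r_0$, subtract the two identities, and divide by $u(r_0+h_n)-u(r_0)$ (which is nonzero, as equality would contradict the subtracted identity). The \Lebesgue~point property gives $\frac{u(r_0+h_n)-u(r_0)}{h_n}\to u'(r_0)$, so the right--hand side converges to $-\alpha\frac{N-1}{u'(r_0)}\frac{1}{r_0^2}$ while the left--hand side is a difference quotient of $G'$ at $u(r_0)$ along $t_n:=u(r_0+h_n)-u(r_0)\to0$; this yields the asserted bound on the $\liminf$ with no second--order expansion at all. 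If you prefer to keep your energy--comparison finish, the error estimates must be carried out in full; note that the paper reserves an argument of that second--variation type for the companion statement, Proposition~\ref{1propaff2}, where only the sign of $G(u+\varphi)+G(u-\varphi)-2G(u)$ matters and the weight causes no trouble.
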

\begin{proof}
Assume (w.l.o.g.) that $u$ is a local minimizer. 
Our first aim is to derive the strong \EL~equation 
\eqref{1paffel} below, which would be an immediate consequence of the
fundamental lemma of \DuBoisR~if $E$ is differentiable at $u$ and
$u$ is a critical point. We consider radially symmetric test
functions $\varphi\in W^{1,\infty}(B_R(0))$ with
compact support in $B_R(0)\setminus\{0\}$ such that the following holds for all $r\in(0,R)$:
\begin{align*}
    \text{$u'(r)+h \varphi'(r)\in H$ for every $h\in [-1,1]$, wherever
     $\varphi'(r)\neq 0$}.
\end{align*}
In particular, the latter implies that $\varphi'=0$ outside of $S$ (choose $h=0$). 
An example for a test function satisfying these properties is constructed below.
For every such $\varphi$ and every $t\in \RR$ with $\abs{t}$ sufficiently small,
\begin{align*}
    0 & \leq \frac{1}{\HMS}\left[E^{**}(u+t\varphi)-E^{**}(u)\right]\\
    &=\int_0^R \left[\alpha
    t\varphi'+G(u+t\varphi)-G(u)\right]r^{N-1}dr \\
    &=\int_0^R \left[-\frac{N-1}{r}\alpha
    t\varphi+G(u+t\varphi)-G(u)\right]r^{N-1}dr,
\end{align*}
due to \eqref{1paffCPWaff} and integration by parts. Since $G$ is of
class $C^1$, differentiation with respect to $t$ at $t=0$ entails
\begin{align}\label{1paffelweak}
    0= \int_0^R \left[-\frac{N-1}{r}\alpha
    +G'(u)\right]r^{N-1}\varphi\,dr.
\end{align}
Moreover, we infer that
\begin{align} \label{1paffel}
    -\frac{N-1}{r_0}\alpha+G'(u(r_0))=0~\text{whenever $r_0\in (0,R)$ is a point of
    density of $S$}
\end{align}
by constructing a suitable admissible test function to rule out the
alternative: Assume (w.l.o.g.) that $\frac{N-1}{r_0}\alpha+G'(u(r_0))>0$
at a point of density $r_0\in (0,R)$ of $S$. 
Thus, by continuity,
\begin{align} \label{1paffnotel}
    -\frac{N-1}{r}\alpha+G'(u(r))>0,~\text{for every $r$ in a
    vicinity $(a_1,a_2)$ of $r_0$}.
\end{align}
Here, recall that $u$ is continuous on $(0,R]$ 
due to the one--dimensional Sobolev imbedding. 
For arbitrary $b\in (a_1,a_2)$ we define
$\varphi_b(r):=-\int_r^R \varphi_b'(t)\,dt$, where
\begin{align*}
    \varphi_b'(r):=\left\{\begin{array}{rl}
    \frac{1}{2}\dist{\abs{u'(r)}}{\RR\setminus H} \quad & \text{on $(a_1,b)\cap S$},\\
    -\frac{1}{2}\dist{\abs{u'(r)}}{\RR\setminus H} \quad & \text{on $(b,a_2)\cap S$},\\
    0 \quad & \text{elsewhere.}
    \end{array}\right.
\end{align*}
By continuity, there is a point $b_0\in (a_1,a_2)$ such that $\varphi_{b_0}(a_1)=0$. Thus
$\varphi_{b_0}\geq 0$ on $(0,R)$ and $\supp \varphi_{b_0}\subset [a_1,a_2]\subset (0,R)$. 
Hence $\varphi_{b_0}$ is admissible as a test function for \eqref{1paffelweak}, contradicting
\eqref{1paffnotel}. Here, note that $\varphi_{b_0}$ does not 
vanish almost everywhere since $(a_1,a_2)\cap S$ is of positive measure -- recall that
$r_0\in (a_1,a_2)$ is a point of density of $S$. \\
Now fix a point $r_0\in S$ which is both a point of density of $S$ and a
\Lebesgue~point of $u'$. Since points of density are never isolated,
there exists a sequence $h_n\neq 0$, $h_n\to 0$ such that $r_0+h_n$
is a point of density of $S$, too, for every $n$. Subtracting the
equations \eqref{1paffel} at $r_0+h_n$ and $r_0$ and dividing by
$h_n$, we get
\begin{align}\label{1paffel2}
    -(N-1)\frac{\alpha}{h_n}\left(\frac{1}{r_0+h_n}-\frac{1}{r_0}\right)
    +\frac{1}{h_n}\left[G'(u(r_0+h_n))-G'(u(r_0))\right]=0.
\end{align}
for every $n\in \NN$. 
%Furthermore,
%\begin{align*}
%    \frac{G'(u(r_0+h_n))-G'(u(r_0))}{h_n}=\frac{G'(u(r_0+h_n))-G'(u(r_0))}
%    {u(r_0+h_n)-u(r_0)}\frac{u(r_0+h_n)-u(r_0)}{h_n},
%\end{align*}
Furthermore,
\begin{align*}
    0\neq \frac{u(r_0+h_n)-u(r_0)}{h_n}=\frac{1}{h_n}\int_0^{h_n} u'(r_0+t)\,dt=:d_n,
\end{align*}
where $u(r_0+h_n)=u(r_0)$ is impossible since this would contradict
\eqref{1paffel2}.
Thus \eqref{1paffel2} can be rewritten as
\begin{align} \label{1paffel3}
    \frac{G'(u(r_0+h_n))-G'(u(r_0))}{u(r_0+h_n)-u(r_0)}
    =-\alpha \frac{N-1}{d_n}\frac{1}{h_n}\left(-\frac{1}{r_0+h_n}+\frac{1}{r_0}\right).
\end{align}
Since $r_0\in S$ is a \Lebesgue~point of $u'$, 
%and a point of density of $S=\{u'\in H\}$, 
we also have that 
\begin{align*}
    \lim_{n\to\infty}\,d_n=u'(r_0)\in H
\end{align*}
and passing to the limit in \eqref{1paffel3} yields the assertion. Here, note that
$\alpha$ and $u'(r_0)$ have the same sign:
$\alpha>0$ if $H\subset (0,\infty)$ and 
$\alpha<0$ if $H\subset (-\infty,0)$, since $\tilde{W}^{**}$ is even and increasing on $(0,\infty)$.
\end{proof}
If $G$ is of class $C^2$, Proposition~\ref{1propaffCP} implies that
$G''(u(r_0))<0$ whenever $r_0$ is a point of density of $S=\{\partial_r u\in H\}$ (as well as a \Lebesgue~point of $\partial_r u$). 
In particular, $G$ is strictly concave near $u(r_0)$.
%(Here, note that continuity of the second derivative of $G$ is important for this conclusion,
%so it cannot be easily avoided even if Proposition~label{1propaffCP} 
%somehow could be modified to use less regularity.)
But in fact, such a point of density $r_0$ cannot exist if $u$ is a local minimizer:
\begin{prop}\label{1propaff2}
Assume that \eqref{W0}--\eqref{Wsym}, \eqref{G0} and \eqref{G2} are satisfied. 
Furthermore suppose that
$\tilde{W}^{**}$ is affine on a bounded open interval $H\subset
\RR$, i.e.
\begin{align} \label{1paffWaff}
    \tilde{W}^{**}(t)=\alpha t+\beta~\text{for every $t\in H$},
\end{align}
where $\alpha,\beta\in \RR$ are constants. Let $u\in
W_0^{1,p}(B_R(0))$ be a radially symmetric local minimizer of $E$.
Then any point $r_0\in (0,R)$ such that
\begin{align*}
    \text{$G$ is strictly concave in a vicinity of $u(r_0)$}
\end{align*}
%\begin{align} \label{1paffGconc}
%    G(u(r_0)+\mu)+G(u(r_0)+\mu)-2G(u(r_0))<0\quad\text{for every $\mu\in (-2\delta,2\delta)$,}
%\end{align}
%for some $\delta>0$ small enough, 
is not a point of density of $S:=\{r\in (0,R)\mid u'(r)\in H\}$. 
In particular, if $G$ is of class $C^2$, $H\subset \RR\setminus\{0\}$ and $\alpha\neq 0$, then $S$ is of measure zero due to Proposition~\ref{1propaffCP}.
\end{prop}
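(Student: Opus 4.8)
The plan is to argue by contradiction: suppose $r_0$ is a point of density of $S$ at which $G$ is strictly concave near $u(r_0)$, and construct an admissible perturbation of $u$ that strictly decreases $E$, contradicting local minimality. The key feature to exploit is that on $H$ the integrand $\tilde{W}^{**}(u')$ is affine (see \eqref{1paffWaff}), so replacing $u'$ by any value that stays in $H$ changes the gradient term only through the linear expression $\alpha u'$, whose contribution can be controlled by integration by parts exactly as in the proof of Proposition~\ref{1propaffCP}. Meanwhile strict concavity of $G$ near $u(r_0)$ should let a suitable \emph{symmetric} two-sided variation of $u$ produce a strict decrease in the $G$-term.

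Concretely, I would first restrict attention to a small radial interval $(a_1,a_2)\ni r_0$ on which $G$ is strictly concave along the values taken by $u$ and on which $S$ has density close to $1$ (possible since $r_0$ is a point of density and $u$ is continuous on $(0,R]$ by the one-dimensional Sobolev imbedding). Then I would build a family of test functions $\varphi$ supported in $[a_1,a_2]$ with $u'+h\varphi'\in H$ wherever $\varphi'\neq 0$, for $|h|\le 1$, mimicking the construction in Proposition~\ref{1propaffCP}; the distance-to-$\mathbb{R}\setminus H$ cutoff $\tfrac12\dist{|u'(r)|}{\RR\setminus H}$ guarantees admissibility while keeping $\varphi'$ supported in $S$. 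The crucial difference from the $C^1$ argument is that here I want a genuinely \emph{second-order} gain, so rather than differentiating once in $t$ I would consider the value of $E^{**}(u+t\varphi)$ together with $E^{**}(u-t\varphi)$ (or the symmetric combination), so that the first-order term in $t$ cancels. Using $\tilde{W}^{**}(u'\pm t\varphi')=\alpha(u'\pm t\varphi')+\beta$ on the support of $\varphi'$, the two gradient contributions cancel exactly up to the linear term, which after integration by parts becomes a first-order term in $\varphi$ that also cancels in the symmetric sum; what survives is
\begin{align*}
    E^{**}(u+t\varphi)+E^{**}(u-t\varphi)-2E^{**}(u)
    =\HMS\int_{a_1}^{a_2}\bigl[G(u+t\varphi)+G(u-t\varphi)-2G(u)\bigr]r^{N-1}\,dr.
\end{align*}
By strict concavity of $G$ this bracket is strictly negative wherever $\varphi\neq 0$, so the left-hand side is strictly negative for small $t>0$, forcing either $E^{**}(u+t\varphi)<E^{**}(u)$ or $E^{**}(u-t\varphi)<E^{**}(u)$ and contradicting local minimality.

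The main obstacle is ensuring that a test function $\varphi$ with the required properties is not merely nonzero in $L^p$ but has $\varphi\not\equiv 0$ \emph{on a set of positive measure where $G$ is strictly concave}, while simultaneously keeping $\varphi'$ supported inside $S$. Because $\varphi$ is recovered from $\varphi'$ by integration, I must choose the signs of $\varphi'$ on subintervals of $(a_1,a_2)\cap S$ so that $\varphi$ does not integrate back to zero; the balancing trick from Proposition~\ref{1propaffCP} (picking a breakpoint $b_0$ so that $\varphi_{b_0}(a_1)=0$ while $\varphi_{b_0}\ge 0$ elsewhere) is exactly what supplies such a $\varphi$, and the positive-density hypothesis on $S$ at $r_0$ is what guarantees $(a_1,a_2)\cap S$ has positive measure so that $\varphi_{b_0}\not\equiv 0$. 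The final sentence of the proposition is then immediate: if $G\in C^2$, $H\subset\RR\setminus\{0\}$ and $\alpha\neq 0$, Proposition~\ref{1propaffCP} shows that every point of density of $S$ (that is also a \Lebesgue~point of $u'$) would have to satisfy $G''(u(r_0))<0$, i.e.\ $G$ strictly concave near $u(r_0)$; but the present result forbids such a point of density, so by Remark~\ref{1remLebesguepoints} the set $S$ has measure zero.
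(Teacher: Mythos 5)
Your proposal is correct and follows essentially the same route as the paper: an indirect argument at a point of density of $S$, the test function $\varphi=\gamma\varphi_{b_0}$ built from the distance-to-$\RR\setminus H$ cutoff with a breakpoint $b_0$ chosen so that $\varphi_{b_0}(a_1)=0$, and the symmetric combination $E^{**}(u+\varphi)+E^{**}(u-\varphi)-2E^{**}(u)$ in which the affine gradient contributions cancel identically (no integration by parts is even needed for the symmetric sum), leaving a strictly negative $G$-term that contradicts local minimality. The deduction of the final sentence from Proposition~\ref{1propaffCP} and Remark~\ref{1remLebesguepoints} also matches the paper.
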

\begin{proof}
The proof is indirect. Assume that $r_0\in (0,R)$ is a point of
density of $S$. 
%such that $G''(u(r_0))<0$. 
We choose $\delta>0$ and
a vicinity $(a_1,a_2)$ of $r_0$, $0<a_1<a_2<R$, small enough such that
\begin{equation}
\begin{aligned}
    {}&\text{$G$ is strictly concave on $[-2\delta+u(r_0),2\delta+u(r_0)]$ and}\\
    {}&\text{$\abs{u(r)-u(r_0)}\leq\delta$ whenever $r\in [a_1,a_2]$}.
\end{aligned}
%	\text{$\abs{u(r)-u(r_0)}\leq\delta$ whenever $r\in [a_1,a_2]$}.
	\label{1paffGdelta}
\end{equation}
Now define a radially symmetric test function $\varphi\in
W^{1,\infty}(B_R(0))$ such that the support of $\varphi$ is contained in $[a_1,a_2]$,
\begin{align}
    &\text{$u'(r)+h \varphi'(r)\in H$ for every $h\in
    [-1,1]$, wherever $\varphi'(r)\neq 0$},\label{1paffadmtf} \\
    &\text{$\varphi\neq 0$ on a set of positive measure and}
    \label{1paffnttf}\\
    &\norm{\varphi}_{L^{\infty}}<\delta.\label{1pafftfbnd}
\end{align}
Such a test function can be obtained analogously to the definition of $\varphi_{b_0}$ in the 
proof of Proposition~\ref{1propaffCP}:
For arbitrary $b\in (a_1,a_2)$ let
$\varphi_b(s):=-\int_s^R \varphi_b'(t)\,dt$, where
\begin{align*}
    \varphi_b'(r):=\left\{\begin{array}{rl}
    \frac{1}{2}\dist{\abs{u'(r)}}{\RR\setminus H} \quad & \text{on $(a_1,b)\cap S$},\\
    -\frac{1}{2}\dist{\abs{u'(r)}}{\RR\setminus H} \quad & \text{on $(b,a_2)\cap S$},\\
    0 \quad & \text{elsewhere,}
    \end{array}\right.
\end{align*}
and choose $b_0\in (a_1,a_2)$ in such a way that $\varphi_{b_0}(a_1)=0$.
Then the function $\varphi:=\gamma \varphi_{b_0}$
fulfills our requirements, where $\gamma\in (0,1]$ is a suitable scaling factor ensuring
\eqref{1pafftfbnd}. Since $u$ is a local minimizer of $E$, we have
\begin{equation}\label{1paffEineq}
\begin{aligned}
    0 & \leq E^{**}(u+\varphi)+E^{**}(u-\varphi)-2E_0^{**}(u)\\
    & =\HMS \int_{(a_1,a_2)\cap \{\varphi \neq 0\}}
    \left[G(u+\varphi)+G(u-\varphi)-2G(u)\right]r^{N-1}dr,
\end{aligned}
\end{equation}
due to \eqref{1paffadmtf} and \eqref{1paffWaff}, at least as long as $\gamma$ (and thus $\norm{\varphi}_{W^{1,p}}$) is small enough. However, by \eqref{1pafftfbnd} and \eqref{1paffGdelta}, 
%by \eqref{1pafftfbnd}, \eqref{1paffGdelta} and \eqref{1paffGconc}, 
$G$ is
strictly concave on an interval containing all possible values of
its arguments in \eqref{1paffEineq}, and thus
$G(u+\varphi)+G(u-\varphi)-2G(u)<0$ wherever $\varphi\neq 0$, which
contradicts \eqref{1paffEineq} by virtue of \eqref{1paffnttf}.
\end{proof}
%%%%%%%%%%%%%%%%%%%%%%%%%%%%%%%%%%%%%%%%%%%%%%%%%%%%%%%%%%%%%%%%%%%%%%
%%%%%%%%%%%%%%%%%%%%%%%%%%%%%%%%%%%%%%%%%%%%%%%%%%%%%%%%%%%%%%%%%%%%%%
%%%%%%%%%%%%%%%%%%%%%%%%%%%%%%%%%%%%%%%%%%%%%%%%%%%%%%%%%%%%%%%%%%%%%%
%\nocite{diss05B}
\bibliographystyle{plain}
\bibliography{diss-bib}
%%%%%%%%%%%%%%%%%%%%%%%%%%%%%%%%%%%%%%%%%%%%%%%%%%%%%%%%%%%%%%%%%%%
%%%%%%%%%%%%%%%%%%%%%%%%%%%%%%%%%%%%%%%%%%%%%%%%%%%%%%%%%%%%%%%%%%%
%%%%%%%%%%%%%%%%%%%%%%%%%%%%%%%%%%%%%%%%%%%%%%%%%%%%%%%%%%%%%%%%%%%
\end{document}